\theoremstyle{plain}
\newtheorem{Th}{Theorem}
\newtheorem{Prop}[Th]{Proposition}
\newtheorem{Cor}[Th]{Corollary}
\newtheorem{Lm}[Th]{Lemma}
\theoremstyle{definition}
\newtheorem{Def}[Th]{Definition}
\theoremstyle{remark}
\newtheorem{Oss}[Th]{Remark}
\newcommand{\Lieg}{\mathfrak{g}}
\newcommand{\ad}{\mathrm{ad \ }}
\newcommand{\adj}[1]{\mathrm{ad}\,#1}
\newcommand{\Lie}{\mathcal{L}_{\mathbb{R}}(x,y)}
\newcommand{\tild}[1]{\widetilde{#1}}
\newcommand{\Exp}{\mathrm{Exp}}
\renewcommand{\d}{\mathrm{d}}
\author
{
  Stefano Biagi \\
  Alma Mater Studiorum - Università di Bologna
}
\title
{
  On M.\,Mérigot's theorem on the convergence domain
  \\
  of the Campbell-Baker-Hausdorff-Dynkin series
}
\begin{document}
\maketitle
\begin{abstract}
 The aim of this manuscript is to present the proof given by Michel Mérigot in 1974 for an enlarged convergence domain of the Campbell-Baker-Hausdorff-Dynkin series
 in the Lie algebra of a Banach-Lie group. This proof is based on a theorem, of independent interest, 
 on the lifetime of the solution of a Cauchy problem. We furnish all the details for this ODE result in Appendix A.
\end{abstract}

\section{Introduction}
 The aim of this manuscript is to present the proof given by Michel Mérigot in 1974 for an enlarged convergence domain of the Campbell-Baker-Hausdorff-Dynkin series (we follow the naming convention in the recent monograph \cite{BonfiglioliFulci}, and we abbreviate it as CBHD series) in the Lie algebra of a Banach-Lie group. This elegant proof is based on a theorem, of independent interest, on the lifetime of the solution of a Cauchy problem.
 For a more recent result on the convergence domain of the CBHD series, see Blanes, Casas \cite{BlanesCasas}.
 See also \cite{BiagiBonfiglioli} for the case of infinite-dimensional Banach-Lie algebras not coming from Lie groups.

  The first section of this manuscript is totally devoted to describe the argument by Mérigot, as appeared in his unpublished manuscript \cite{Merigot}. Later on, we insert two 
  Appendices, \emph{not} appearing in the original work of Mérigot, where we provide more details.\medskip

 The material presented here will be part of the PhD Thesis ``Subelliptic Operators on Lie Groups'' by Stefano Biagi 
 (in preparation, 2013; Department of Mathematics, Alma Mater Studiorum Università di Bologna: 
 Bologna, Italy. Advisor: Prof.\,Andrea Bonfiglioli).\medskip

 \textbf{Acknowledgements.}
 The availability of the manuscript \cite{Merigot} is due to
 Professor Jean Michel, who shared the original copy with the Advisor of my PhD Thesis.
 I join my Advisor in thanking Professor Michel for making available to us the manuscript.

\section{The convergence domain of the Campbell-Hausdorff series.} \label{Sec::1}
\begin{quote}
[The contents of Section \ref{Sec::1} contain a description of the arguments in \cite{Merigot}, as faithful as possible if compared to the original manuscript. We restrict to present the argument concerning the \emph{`Campbell-Hausdorff series'} (as Mérigot calls it), highlighting in due course the points where we perform any omission. Furthermore, we allow ourselves to insert some clarifications on definitions and notations (in blue footnotes).]
\end{quote}
 \bigskip

Let $\mathbb{G}$ be a Banach-Lie group and let $\Lieg := \mathrm{Lie}(\mathbb{G})$ be the Lie-algebra of $\mathbb{G}$. From the fact that $\mathbb{G}$ is a Banach-Lie group, it follows that $\Lieg$ is a Banach space, and the norm on this space is compatible with the Lie algebra structure:
\begin{equation}\label{submult}
\|[a,b]\| \leq \|a\|\|b\|, \qquad \forall \ a,b \in \Lieg.
\end{equation}
 For $a,b$ in a suitable neighborhood of $0$ (the neutral element of $\Lieg)$ it is defined the so-called \emph{Hausdorff function}
\begin{equation}\label{Hfunct}
  \phi(a,b)=a+b+\frac{1}{2}\,[a,b]+
  \frac{1}{12}([a,[a,b]]+[b,[b,a]])-\frac{1}{24}\,[a,[b,[a,b]]]+\cdots,
\end{equation}
 and the (partial) map $y \mapsto \phi(a,b)$ satisfies the following Cauchy problem\footnote{\color{blue} In the sense of Fréchet differentials in the Banach space $\Lieg$.}
\begin{equation} \label{S1IntCauchypb}
\begin{cases}
 \displaystyle \d_b\phi(a,b) = \frac{\adj{\phi(a,b)}}{1 - \exp(-\adj{\phi(a,b)})}
 \circ\frac{1 - \exp(-\adj{b})}{\adj{b}} \\
\phi(a,0) = a.
\end{cases}
\end{equation}
\subsection{Properties of the function $\phi$.}
First of all, we recall that the function $\phi$ satisfies the following identity
$$\Exp(\phi(a,b)) = \Exp(a)\,\Exp(b),$$
where $\Exp:\Lieg \to \mathbb{G}$ is the Exponential Map.
Let now\footnote{
\color{blue} Here and in the sequel, we adopt the usual convention in defining functions
of continuous endomorphisms of the vector space $\Lieg$. For example,
$$\frac{\exp(\adj{z})-1}{\adj{z}} := \sum_{n = 0}^{\infty}\frac{(\adj{z})^{n}}{(n+1)!},\quad z\in \Lieg.$$
}
$$\mathcal{A} = \left\{z \in \Lieg: \frac{1 - \exp(\adj{z})}{\adj{z}} \ \textrm{is invertible}\right\}.$$
 We define $\Delta \subseteq \Lieg\times\Lieg$ to be the set of the couples $(a,b)$ such that $\phi(a,b)$ is well-posed as the maximal solution of \eqref{S1IntCauchypb}. The set $\Delta$ is an open subset of $\Lieg\times\Lieg$, and
$$\phi: \Delta \longrightarrow \mathcal{A}.$$
 For example, if $a,b \in \Lieg$ are such that $[a,b] = 0$ and $a + tb \in \mathcal{A}$ for all $0 \leq t \leq 1$, then $(a,b) \in \Delta$ and $\phi(a,b) = a + b$.
\begin{Prop}\label{prop.bern}
The set $\mathcal{A}$ contains the open ball ${B}$ in $\Lieg$ centered at $0$ and radius $2\pi$.
\end{Prop}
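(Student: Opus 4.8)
The plan is to produce, for each $z\in\Lieg$ with $\|z\|<2\pi$, an explicit bounded two-sided inverse of the operator $u(\adj z):=\frac{1-\exp(\adj z)}{\adj z}$, thereby showing $z\in\mathcal{A}$. First I would record that $T:=\adj z$ is a continuous endomorphism of $\Lieg$ with $\|T\|\le\|z\|$: indeed $\|T(w)\|=\|[z,w]\|\le\|z\|\,\|w\|$ by \eqref{submult}, so $\|T\|\le\|z\|<2\pi$. Everything then reduces to the scalar holomorphic function $u(\lambda)=\frac{1-e^{\lambda}}{\lambda}$ (with $u(0)=-1$) via the functional calculus described in the footnote.

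The key analytic input is the location of the zeros of $u$. Since $u(\lambda)=0$ with $\lambda\neq0$ forces $e^{\lambda}=1$, the zeros of $u$ are exactly the points $2\pi i k$, $k\in\mathbb{Z}\setminus\{0\}$, while $u(0)=-1\neq0$. Hence $u$ is non-vanishing on the open disc $\{|\lambda|<2\pi\}$, the reciprocal $1/u$ is holomorphic there, and its Taylor expansion $\frac{1}{u(\lambda)}=\sum_{n\ge0}c_n\lambda^{n}$ has radius of convergence exactly $2\pi$, namely the distance from $0$ to the nearest pole $\pm2\pi i$. These coefficients are, up to sign and factorials, the Bernoulli numbers, since $\frac{\lambda}{e^{\lambda}-1}=\sum_{n\ge0}\frac{B_n}{n!}\lambda^{n}$ is the classical Bernoulli generating function, whose radius of convergence is likewise $2\pi$. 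In particular $\sum_{n\ge0}|c_n|\,x^{n}<\infty$ for every $x<2\pi$.

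With this in hand I would define the candidate inverse $S:=\sum_{n\ge0}c_n\,T^{n}$. Because $\|T^{n}\|\le\|T\|^{n}$ and $\|T\|<2\pi$, the series converges absolutely in the operator norm, so $S$ is a well-defined bounded endomorphism of $\Lieg$. It remains to verify $S\circ u(T)=u(T)\circ S=\mathrm{Id}$. Both factors are absolutely convergent power series in the single operator $T$, hence they commute and may be multiplied by the Cauchy product rule in the Banach algebra of bounded operators; the resulting coefficients are precisely those of the product $u(\lambda)\cdot\frac{1}{u(\lambda)}\equiv1$, i.e.\ the identity. This gives $z\in\mathcal{A}$, and since $z$ was arbitrary in the ball $B$ of radius $2\pi$, we conclude $B\subseteq\mathcal{A}$.

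The routine parts are the norm estimate $\|T\|\le\|z\|$ and the term-by-term multiplication of absolutely convergent operator series. The genuine obstacle is the analytic fact that the radius of convergence of $1/u$ is exactly $2\pi$; this is what forces the threshold $2\pi$ and must be extracted from the precise determination of the zeros of $u$ (equivalently, from the growth rate $\limsup|B_n/n!|^{1/n}=1/(2\pi)$ of the Bernoulli numbers). A cleaner but less elementary alternative bypasses the explicit series: by the spectral mapping theorem $\sigma\bigl(u(T)\bigr)=u\bigl(\sigma(T)\bigr)$, and since the spectral radius satisfies $r(T)\le\|T\|<2\pi$ the spectrum $\sigma(T)$ avoids every zero $2\pi i k$ of $u$, so $0\notin\sigma\bigl(u(T)\bigr)$ and $u(T)$ is invertible.
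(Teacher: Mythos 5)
Your argument is correct and is essentially the paper's own proof: both exhibit the inverse of $\frac{1-\exp(\adj z)}{\adj z}$ as the operator power series obtained from the Maclaurin expansion of $w/(1-e^w)$ (holomorphic on $|w|<2\pi$ because the zeros of $1-e^w$ other than the removable one at $0$ lie at $2\pi i k$), convergent since $\big\bracevert\adj z\big\bracevert\leq\|z\|<2\pi$, and verify the identity by multiplying the two absolutely convergent series. Your closing remark on the spectral mapping theorem is a valid shortcut not taken in the paper, but the main line of your proof coincides with the original.
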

\begin{proof}
The function
$$f(w) = \frac{w}{1 - e^w}, \quad w \in \mathbb{C}$$
is holomorphic on $D = \{w\in \mathbb{C} : |w| < 2\pi\}$. Its Maclaurin series is therefore convergent on $D$, uniformly on every compact subset $K \subseteq D$, and\footnote{
\color{blue} We remark that $\alpha_n=-B_n/n!$, where the $B_n$ are the usual Bernoulli numbers.
}
$$f(w) = \sum_{n = 0}^{\infty}\alpha_n\,w^n, \quad w \in D.$$
For $z$ in a suitable neighborhood of $0$ in $\Lieg$, let us now consider the function $\tild{f}(z)$ defined by the power series
$$\widetilde{f}(z):=\sum_{n = 0}^{\infty}\alpha_n(\adj{z})^n.$$
A sufficient condition for this power series to be
convergent is that\footnote{\textcolor[rgb]{0.00,0.00,1.00}{The original manuscript uses the notation $\|\cdot\|$ for the operator norm of $\adj{z}$; to avoid any confusion with the norm of the Banach space $\Lieg$, we indicate the operator norm by $\big\bracevert\cdot\big\bracevert$, that is
$$\big\bracevert \adj{z}\big\bracevert :=\sup_{h\in \Lieg :\, \|h \|\leq 1} \|(\adj{z})(h)\|. $$}}
$\big\bracevert \adj{z}\big\bracevert < 2\pi$, and thus, in particular, it is convergent if $\|z\| < 2\pi$
(i.e., if $z\in B$), since
(see \eqref{submult})
$$\big\bracevert\adj{z}\big\bracevert \leq \|z\|.$$
By definition, one has
$$\frac{1 - \exp(\adj{z})}{\adj{z}} = -\sum_{n = 0}^{\infty}\frac{(\adj{z})^{n}}{(n+1)!}, \quad z\in \Lieg.$$
From the fact that
$$\frac{1 - e^w}{w}\cdot f(w) = 1,\quad w\in D,$$
we obtain the same kind of identity for $\tild{f}$:
$$\frac{1 - \exp(\adj{z})}{\adj{z}}\circ \tild{f}(z) = \textrm{id}, \quad \forall \ z \in {B},$$
and this proves the proposition.
\end{proof}
\begin{quote}
 \textcolor[rgb]{0.00,0.00,1.00}{[Now we omit, from the dissertation in \cite{Merigot},
a proposition concerning some invertibility properties of
the Exponential Map from $\Lieg$ to $\mathbb{G}$, which is not
used in the sequel.]}
\end{quote}
\subsection{Study of the lifetime of the solution of an ODE.}\label{Sec::2}
First of all, we introduce some notations.
Henceforth, $(E,\|\cdot\|)$ is a fixed real Banach space.
Let $a,b \in \mathbb{R}$ be two fixed positive real numbers, and let $I = [0,a]$ and $J = [0,b]$. We set
$$\tild{\Omega} := I\times J,\quad\text{and}\quad
 \Omega := \Big\{(t,y)\in \mathbb{R}\times E : (|t|,\|y\|) \in \tild{\Omega}\Big\}.$$
\begin{Prop} \label{S1SS1MainProp}
Let us consider the $E$-valued differential equation $y' = f(t,y)$, defined on $\Omega$, where
$f(t,y)$ is a locally Lipschitz-continuous $E$-valued function defined on $\Omega$. Moreover, let us assume that
\begin{equation}\label{ineg}
\|f(t,y)\| \leq g(|t|,\|y\|),\quad \forall\,\,(t,y)\in \Omega,
\end{equation}
 where $g(t,z)$ is a locally Lipschitz-continuous function on $\tild{\Omega}$ which is
 non-decreasing w.r.t.\,the second variable $z\in J$. We use the notation $y = \varphi(t)$, $t \in (\alpha,\beta)$, for the maximal solution of the Cauchy problem (in the Banach space $E$)
\begin{equation}\label{EDOfifi}
 \begin{cases}
 y' = f(t,y) \\
 y(0) = x,
 \end{cases}
\end{equation}
 and the notation
$\tild{y} = \tild{\varphi}(t)$, $t \in (\tild{\alpha},\tild{\beta})$, for
the maximal solution of the real Cauchy problem
$$\begin{cases}
\tild{y}' = g(t,\tild{y}) \\
\tild{y}(0) = \|x\|.
\end{cases}$$
Then one has $$(-\tild{\beta},\tild{\beta}) \subseteq (\alpha,\beta).$$
\end{Prop}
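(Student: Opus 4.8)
The plan is to dominate the norm $\|\varphi(t)\|$ of the $E$-valued solution by the scalar comparison solution $\tilde\varphi$, and then to use a continuation (maximality) argument to conclude that $\varphi$ cannot cease to exist before time $\tilde\beta$. First I would reduce the two-sided statement to the one-sided claim $\beta\ge\tilde\beta$. Setting $\psi(\tau):=\varphi(-\tau)$, the function $\psi$ is the maximal solution of $\psi'=\hat f(\tau,\psi)$, $\psi(0)=x$, with $\hat f(\tau,y):=-f(-\tau,y)$; since $\|\hat f(\tau,y)\|=\|f(-\tau,y)\|\le g(|\tau|,\|y\|)$ and the domain $\Omega$ is even in $t$, the data $\hat f,g,x$ satisfy exactly the same hypotheses with the \emph{same} scalar comparison solution $\tilde\varphi$. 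Hence the one-sided bound applied to $\psi$ gives $-\alpha\ge\tilde\beta$, i.e.\ $\alpha\le-\tilde\beta$, so it suffices to treat the forward problem.

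For the forward estimate I would pass to the integral form $\varphi(t)=x+\int_0^t f(s,\varphi(s))\,\d s$. Writing $u(t):=\|\varphi(t)\|$, the triangle inequality together with \eqref{ineg} yields, for $t\in[0,\beta)$,
$$u(t)\ \le\ \|x\|+\int_0^t\big\|f(s,\varphi(s))\big\|\,\d s\ \le\ \|x\|+\int_0^t g\big(s,u(s)\big)\,\d s.$$
Each integrand is well defined, since $\varphi$ takes values in $\Omega$ forces $(s,u(s))=(|s|,\|\varphi(s)\|)\in\tild{\Omega}$ for every $s\in[0,\beta)$. This exhibits $u$ as a subsolution, in integral form, of the equation solved with equality by $\tilde\varphi$.

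The crux is then a \emph{monotone} integral comparison lemma: if $u(0)=\tilde\varphi(0)=\|x\|$, if $u$ satisfies the integral inequality above while $\tilde\varphi$ satisfies the corresponding equality, and if $g(s,\cdot)$ is non-decreasing, then $u(t)\le\tilde\varphi(t)$ on $[0,\min(\beta,\tilde\beta))$. I would prove this by the standard strict-supersolution trick: compare $u$ with the solution $\tilde\varphi_\varepsilon$ of $\tilde y'=g(t,\tilde y)+\varepsilon$, $\tilde y(0)=\|x\|$; a first-contact argument, in which the monotonicity of $g$ in its second variable is exactly what closes the inequality at the putative first crossing time, gives $u<\tilde\varphi_\varepsilon$, and letting $\varepsilon\to0^+$ yields $u\le\tilde\varphi$. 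This monotonicity is the one genuinely non-formal hypothesis, and converting the pointwise bound \eqref{ineg} into the global comparison $u\le\tilde\varphi$ is the step I expect to require the most care.

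Finally I would run the continuation argument, taking care that $E$ may be infinite-dimensional (so balls are not compact and one cannot invoke ``the solution leaves every compact set''). Since $g$ is defined only for $t\in I=[0,a]$, one has $\tilde\beta\le a$, and by maximality $\tilde\varphi(t)<b$ for every $t\in[0,\tilde\beta)$. Suppose, for contradiction, that $\beta<\tilde\beta$. Then $\beta<a$ and $M:=\max_{[0,\beta]}\tilde\varphi<b$, so the comparison gives $\|\varphi(t)\|\le M$ on $[0,\beta)$; hence $\|f(t,\varphi(t))\|\le g(t,\|\varphi(t)\|)\le\max_{[0,\beta]\times[0,M]}g=:C<\infty$, the maximum being finite because $g$ is continuous on the compact rectangle $[0,\beta]\times[0,M]\subset\mathbb{R}^2$. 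Thus $\varphi'$ is bounded by $C$, so $\varphi$ is Lipschitz on $[0,\beta)$ and possesses a limit $\xi:=\lim_{t\to\beta^-}\varphi(t)$ with $\|\xi\|\le M<b$; since also $\beta<a$, the point $(\beta,\xi)$ is interior to $\Omega$. Solving \eqref{EDOfifi} afresh from $(\beta,\xi)$ via local Lipschitz-continuity extends $\varphi$ beyond $\beta$, contradicting maximality. Therefore $\beta\ge\tilde\beta$, and together with $\alpha\le-\tilde\beta$ this gives $(-\tilde\beta,\tilde\beta)\subseteq(\alpha,\beta)$.
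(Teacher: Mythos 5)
Your proposal is correct and follows essentially the same route as the paper: a comparison estimate $\|\varphi(t)\|\le\tild{\varphi}(t)$ obtained by perturbing $g$ to a strict supersolution ($g+\epsilon$) and running a first-contact/prolongation argument, then a completeness-based continuation argument at $t=\beta$ to contradict maximality, and finally the reflection $t\mapsto -t$ for the left endpoint. The only (immaterial) differences are that the paper first normalizes to $x=0$ by translation and derives the Cauchy condition at $\beta^-$ from $\|\varphi(t)-\varphi(s)\|\le|\tild{\varphi}(t)-\tild{\varphi}(s)|$ rather than from a uniform bound on $\|f\|$ over a compact rectangle.
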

In order to prove this proposition, we remark that is not restrictive to assume that $\|x\| = 0$
(see Section \ref{Sec::Appendix1}). With this assumption, let us first show that
$$\|\varphi(t)\| \leq \tild{\varphi}(t), \quad \text{for every
 $t \in K := (0,\tild{\beta})\cap(0,\beta)$.}$$
 This is a consequence of the following results: Lemmas
  \ref{S1SS1Lm1} and \ref{S1SS1Lm2}.
 In the first preliminary lemma, the hypothesis
 \eqref{ineg} of Proposition \ref{S1SS1MainProp} is strengthened to a strict inequality.
\begin{Lm} \label{S1SS1Lm1}
Let us consider the differential equations
\begin{align}
y' = h(t,y), & \quad \textrm{$h$ an $E$-valued function, defined and continuous on $\Omega$,} \label{S1SS1Eq1Lm1}\\
z' = \tild{h}(t,z), & \quad \textrm{$\tild{h}$ a real-valued function, defined and continuous on $\tild{\Omega}$}, \label{S1SS1Eq2Lm1}
\end{align}
and let us assume that the \emph{strict inequality}
\begin{equation}\label{ipotesistretta}
 \|h(t,y)\| < \tild{h}(|t|,\|y\|)
\end{equation}
is fulfilled, for every $(t,y)\in \Omega$.
We assume that $\widetilde{h}(t,z)$ is non-decreasing w.r.t.\,$z\in J$.
Moreover, let $\psi(t)$ and $\tild{\psi}(t)$ be the maximal solutions of the problems \eqref{S1SS1Eq1Lm1} and \eqref{S1SS1Eq2Lm1} such that
$$\psi(0)=0\in E,\qquad \tild{\psi}(0) = 0\in\mathbb{R}.$$
We denote by $(\alpha,\beta)$ and by $(\widetilde{\alpha},\widetilde{\beta})$
the maximal domains of $\psi$ and $\widetilde{\psi}$, respectively.\medskip

Then, for every $t \in K:=(0,\min\{\beta,\widetilde{\beta}\})$, one has
 $$\|\psi(t)\| < \tild{\psi}(t).$$
\end{Lm}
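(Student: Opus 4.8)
The plan is to reduce the Banach-space statement to a one-dimensional comparison by following the scalar function $u(t):=\|\psi(t)\|$, and then to run a classical \emph{first point of contact} argument against $\tild{\psi}$, using the strict inequality \eqref{ipotesistretta} to close it. Since $\psi$ solves \eqref{S1SS1Eq1Lm1} it is differentiable, with $\psi'(t)=h(t,\psi(t))$; the scalar function $u$, however, need not be differentiable, because the norm of $E$ is not smooth, so I would phrase everything through Dini derivatives. The first—and decisive—step is the elementary estimate
$$D^{+}u(t)\le\|\psi'(t)\|,\qquad D^{-}u(t)\le\|\psi'(t)\|,$$
valid at every interior $t$; it follows at once from $\bigl|\,\|\psi(t+h)\|-\|\psi(t)\|\,\bigr|\le\|\psi(t+h)-\psi(t)\|$ upon dividing by $|h|$ and letting $h\to0$. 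Combining it with $\psi'(t)=h(t,\psi(t))$ and hypothesis \eqref{ipotesistretta} yields, for every interior $t>0$,
$$D^{\pm}u(t)\le\|h(t,\psi(t))\|<\tild{h}\bigl(t,\|\psi(t)\|\bigr)=\tild{h}\bigl(t,u(t)\bigr).$$

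Next I would examine the origin. One has $u(0)=0=\tild{\psi}(0)$, while $\tild{\psi}'(0)=\tild{h}(0,0)>\|h(0,0)\|\ge D^{+}u(0)$; hence the lower right Dini derivative of $v:=\tild{\psi}-u$ at $0$ is strictly positive, and therefore $u(t)<\tild{\psi}(t)$ on some punctured right neighbourhood $(0,\delta)$. Consequently the contact set $S:=\{t\in K:\ u(t)\ge\tild{\psi}(t)\}$, which is relatively closed in $K$ by continuity, is bounded away from $0$.

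Suppose, for a contradiction, that $S\ne\varnothing$, and put $t_{0}:=\inf S$. By the previous step $t_{0}\ge\delta>0$; since $S\subseteq K$ forces $t_{0}<\sup K$, we have $t_{0}\in K$, and relative closedness gives $t_{0}\in S$, while the definition of the infimum gives $u(t)<\tild{\psi}(t)$ for all $t\in(0,t_{0})$. By continuity $u(t_{0})=\tild{\psi}(t_{0})$. I would then produce two incompatible bounds for $D_{-}v(t_{0})$. On one hand, since $v(t_{0})=0$ and $v(t)>0$ for $t<t_{0}$ near $t_{0}$,
$$D_{-}v(t_{0})=\liminf_{h\to0^{+}}\frac{-\,v(t_{0}-h)}{h}\le0.$$
On the other hand, by the superadditivity of the lower Dini derivative and the first step applied at $t_{0}$,
$$D_{-}v(t_{0})\ge\tild{\psi}'(t_{0})-D^{-}u(t_{0})\ge\tild{h}\bigl(t_{0},\tild{\psi}(t_{0})\bigr)-\|h(t_{0},\psi(t_{0}))\|>0,$$
the final strict inequality being exactly \eqref{ipotesistretta} at $(t_{0},\psi(t_{0}))$, once we recall $\|\psi(t_{0})\|=u(t_{0})=\tild{\psi}(t_{0})$. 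The two estimates are contradictory, so $S=\varnothing$ and $\|\psi(t)\|<\tild{\psi}(t)$ throughout $K$.

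The only genuinely delicate point is the first step: since $t\mapsto\|\psi(t)\|$ is merely locally Lipschitz, the comparison has to be carried out with Dini derivatives rather than ordinary ones, keeping the left and right versions separate. Everything else is the standard strict comparison scheme. I also expect the monotonicity of $\tild{h}$ in its second variable to be \emph{unnecessary} for this strict-inequality lemma—$\tild{h}$ is only ever evaluated at the common value $u(t_{0})=\tild{\psi}(t_{0})$ at the contact point—becoming essential only in the passage to the non-strict inequality of Lemma \ref{S1SS1Lm2}.
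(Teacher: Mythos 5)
Your argument is correct, and it is genuinely different from the one in the paper. The paper's proof works with the Volterra integral identities: it first establishes the strict inequality on a small interval $(0,\tau_1)$ via the Mean Value Theorem, and then runs a prolongation argument in which, assuming $\|\psi(t)\|<\tild{\psi}(t)$ for all $t<t_0$, one estimates $\|\psi(t)\|\le\int_0^t\|h(u,\psi(u))\|\,\d u<\int_0^t\tild{h}(u,\|\psi(u)\|)\,\d u\le\int_0^t\tild{h}(u,\tild{\psi}(u))\,\d u=\tild{\psi}(t)$; the last step uses the monotonicity of $\tild{h}$ in $z$ at every $u\in(0,t_0)$, so that hypothesis is load-bearing in the paper's route. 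Your route instead is the classical first-point-of-contact comparison with Dini derivatives applied to $u(t)=\|\psi(t)\|$: the key inequality $D^{\pm}u(t)\le\|\psi'(t)\|$ reduces everything to the scalar differential inequality $D^{\pm}u<\tild{h}(t,u)$, and the contradiction at $t_0=\inf S$ only evaluates $\tild{h}$ at the common value $u(t_0)=\tild{\psi}(t_0)$, so — as you correctly observe — the monotonicity of $\tild{h}$ is not needed for this strict lemma (it becomes essential only in the $\epsilon$-perturbation passage to the non-strict Lemma \ref{S1SS1Lm2}, and again in the paper's Cauchy-criterion estimate in the proof of Proposition \ref{S1SS1MainProp}). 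What your approach buys is a sharper identification of which hypothesis is used where, at the cost of the Dini-derivative bookkeeping (keeping $D^{-}$ and $D_{-}$ separate, and the superadditivity $D_{-}(\tild{\psi}-u)\ge\tild{\psi}'-D^{-}u$, all of which you handle correctly); what the paper's integral formulation buys is that the very same computation is recycled later to prove $\|\varphi(t)-\varphi(s)\|\le|\tild{\varphi}(t)-\tild{\varphi}(s)|$ when extending $\varphi$ up to $\beta$.
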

\begin{proof}
Since, by hypothesis, one has
$$\|h(0,0)\| < \tild{h}(0,0),$$
and the functions $h$ and $\tild{h}$ are both continuous, there exist $\eta, \tau > 0$ such that
$$\|h(t,y)\| < \tild{h}(t,z),
\quad \text{whenever $\|y\|,\,|z| < \eta$  and $|t| < \tau$.}$$
Starting from this inequality, and by applying the well-known Mean Value Theorem to the functions
 $\psi$ and $\tild{\psi}$, one can find\footnote{\color{blue} See Section \ref{Sec::Appendix1}
 for more details.} a real number $\tau_1$ such that
\begin{equation}\label{tau1}
\|\psi(t)\| < \tild{\psi}(t), \quad \forall \ |t| < \tau_1.
\end{equation}
Now, let us assume that $t_0 \in K$ is such that $\|\psi(t)\| < \tild{\psi}(t)$ for all $t < t_0$; then
 we show that this holds true up to $t_0$ itself, i.e.,
$$\|\psi(t_0)\| < \tild{\psi}(t_0).$$
Indeed, since the functions $\psi$ and $\tild{\psi}$ are solutions of the Volterra identities
$$\psi(t) = \int_0^th(u,\psi(u))\,\d u, \quad \tild{\psi}(t) = \int_0^t\tild{h}(u,\tild{\psi}(u))\,\d u,$$
then, for all $0 \leq t < t_0$, one gets
\begin{align}
\|\psi(t)\| & = \left\|\int_0^th(u,\psi(u))\,\d u\right\| \nonumber \\
& < \int_0^t\tild{h}(u,\|\psi(u)\|)\,\d u \label{S1SS1EqProofLm1_1}\\
& \leq  \int_0^t\tild{h}(u, \tild{\psi}(u))\,\d u = \tild{\psi}(t). \label{S1SS1EqProofLm1_2}
\end{align}
Inequality \eqref{S1SS1EqProofLm1_1} follows from \eqref{ipotesistretta}, while
inequality \eqref{S1SS1EqProofLm1_2} holds true since the map $\tild{h}(u,z)$ is increasing w.r.t.\,$z$ (and due to
the assumption on $t_0$). For $t = t_0$, the inequality \eqref{S1SS1EqProofLm1_1} is
still true in the strict sense, and thus
$$\|\psi(t_0)\| < \tild{\psi}(t_0).$$
Since this inequality is fulfilled
in a neighborhood of $0$ (see \eqref{tau1}), the above ``prolongation''
argument demonstrates that the inequality can be actually extended to the whole of $K$, and this proves the lemma.
\end{proof}
\begin{Lm} \label{S1SS1Lm2}
 With the same notation and hypotheses
  of Lemma \ref{S1SS1Lm1}, if $\|h(t,y)\| \leq \tild{h}(|t|,\|y\|)$ then one has
 $$\|\psi(t)\| \leq \tild{\psi}(t)\quad \text{for every $t\in [0,\min\{\beta,\widetilde{\beta}\})$}.$$
\end{Lm}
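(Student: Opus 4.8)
The plan is to deduce this non-strict comparison from the strict one (Lemma \ref{S1SS1Lm1}) by an approximation argument, perturbing the comparison function by a vanishing positive constant. For $\varepsilon > 0$ I would introduce $\tild{h}_\varepsilon(t,z) := \tild{h}(t,z) + \varepsilon$ on $\tild{\Omega}$. This $\tild{h}_\varepsilon$ is again continuous and non-decreasing in $z\in J$, and from the standing hypothesis $\|h(t,y)\| \leq \tild{h}(|t|,\|y\|)$ one obtains the \emph{strict} inequality $\|h(t,y)\| < \tild{h}_\varepsilon(|t|,\|y\|)$ for every $(t,y)\in \Omega$. Denoting by $\tild{\psi}_\varepsilon$ the maximal solution of $z' = \tild{h}_\varepsilon(t,z)$ with $\tild{\psi}_\varepsilon(0) = 0$, defined on some interval $(\tild{\alpha}_\varepsilon, \tild{\beta}_\varepsilon)$, Lemma \ref{S1SS1Lm1} applies verbatim to the pair $(h, \tild{h}_\varepsilon)$ and yields $\|\psi(t)\| < \tild{\psi}_\varepsilon(t)$ for all $t \in (0, \min\{\beta, \tild{\beta}_\varepsilon\})$.

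It then remains to let $\varepsilon \to 0^+$ and to recover the unperturbed solution $\tild{\psi}$. I would fix an arbitrary $T$ with $0 < T < \min\{\beta, \tild{\beta}\}$ and work on the compact interval $[0,T]$. The crucial point is that, for all sufficiently small $\varepsilon$, one has $\tild{\beta}_\varepsilon > T$, and moreover $\tild{\psi}_\varepsilon \to \tild{\psi}$ uniformly on $[0,T]$ as $\varepsilon \to 0^+$. Granting this, a passage to the limit in $\|\psi(t)\| < \tild{\psi}_\varepsilon(t)$ gives $\|\psi(t)\| \leq \tild{\psi}(t)$ for every $t\in[0,T]$; and since $T < \min\{\beta, \tild{\beta}\}$ was arbitrary, the inequality extends to the whole of $[0, \min\{\beta, \tild{\beta}\})$, which is the assertion.

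The main obstacle is precisely this convergence step, i.e.\ the control of the lifetimes $\tild{\beta}_\varepsilon$ and of the limit of the family $\{\tild{\psi}_\varepsilon\}$ as $\varepsilon \to 0^+$. Here the standard theory on the continuous dependence of solutions of an ODE upon parameters enters: the maximal existence time is lower semicontinuous in $\varepsilon$, whence $\tild{\beta}_\varepsilon > T$ for $\varepsilon$ small; and on $[0,T]$ the functions $\tild{\psi}_\varepsilon$ are uniformly bounded with uniformly bounded derivatives (since $\tild{h}$ is bounded on compacta), hence equicontinuous, so that Arzelà--Ascoli furnishes a uniform limit solving the unperturbed problem $z' = \tild{h}(t,z)$, $z(0)=0$. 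Finally, one identifies this limit with the maximal solution $\tild{\psi}$ by using the classical fact that $\tild{\psi}$ is the decreasing limit of the $\tild{\psi}_\varepsilon$ as $\varepsilon \downarrow 0$. As with the reduction to $\|x\|=0$ invoked for Proposition \ref{S1SS1MainProp}, I expect the careful verification of this continuous-dependence step to be deferred to Section \ref{Sec::Appendix1}.
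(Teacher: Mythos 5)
Your proposal is correct and follows essentially the same route as the paper: perturb the majorant by a positive quantity vanishing with $\varepsilon$ (the paper uses $\tild{h}(t,\tild{y})+\varepsilon(1+\tild{y})$ in the main text and $g(t,z)+\varepsilon$ in Appendix A, which is exactly your choice), apply Lemma \ref{S1SS1Lm1} to get the strict inequality, and pass to the limit $\varepsilon\to 0^+$ using lower semicontinuity of the lifetime $\tild{\beta}_\varepsilon$ and continuous dependence on the parameter. The only cosmetic difference is that the paper argues pointwise in $t$ rather than uniformly on compact subintervals via Arzel\`a--Ascoli.
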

\begin{proof}
 Given a real parameter $\epsilon>0$, let us consider the differential equations
\begin{align*}
y' & = h(t,y), \\
\tild{y}' & = \tild{h}(t,\tild{y}) + \epsilon\,(1 + \tild{y}) =: \tild{h}_{\epsilon}(t,\tild{y}),
\end{align*}
 and let $\psi$ and $\tild{\psi}_{\epsilon}$ be their maximal solutions such that $\psi(0)=0\in E$
  and  $\tild{\psi}_{\epsilon}(0) = 0\in\mathbb{R}$, with maximal domains
  $(\alpha,\beta)$ and $(\widetilde{\alpha}_\epsilon, \widetilde{\beta}_\epsilon)$. As usual, when
  $\epsilon=0$ we simply write $(\widetilde{\alpha}_0, \widetilde{\beta}_0)=
  (\widetilde{\alpha}, \widetilde{\beta})$.

   For all fixed $t \in K=(0,\min\{\beta,\widetilde{\beta}\})$, there exists $\epsilon_1$ (depending on $t$) such that
$$t \in [0,\tild{\beta}_{\epsilon}), \quad \forall \ \epsilon < \epsilon_1.$$
 (This is a consequence of the continuity of $\widetilde{h}_\epsilon$
  w.r.t.\,$\epsilon$ and standard ODE Theory results.)
 Since $\epsilon>0$, it follows from Lemma \ref{S1SS1Lm1} that $\|\psi(t)\| < \tild{\psi}_{\epsilon}(t)$, and since
 $\lim_{\epsilon \to 0^+}\tild{\psi}_{\epsilon}(t) =\tild{\psi}(t)$,
 one has $\|\psi(t)\| \leq \tild{\psi}(t)$.
\end{proof}
 We are finally in a position to provide the following:

 \begin{proof}[Proof (of Proposition \ref{S1SS1MainProp}.)]
 We inherit all notations in the statement of Proposition \ref{S1SS1MainProp}.
  By contradiction, let us assume that $\beta < \tild{\beta}$
  (thus $\min\{\beta,\widetilde{\beta}\}=\beta$). The function $\varphi$ is defined on $(\alpha,\beta)$
  and the function $\widetilde{\varphi}$ is defined on $(\widetilde{\alpha},\widetilde{\beta})$, and from Lemma \ref{S1SS1Lm2} it follows that
 \begin{equation}\label{diseg.qqqq}
 \|\varphi(t)\| \leq \widetilde{\varphi}(t) \leq \widetilde{\varphi}(\beta),\quad
 \text{for $0\leq t<\beta$}.
 \end{equation}
  The second inequality is a consequence of the monotonicity of $\widetilde{\varphi}$ (recall that
 $g\geq 0$ on its domain, thanks to \eqref{ineg}).

 We now claim that $\varphi(t)$ satisfies the Cauchy condition for the existence of its limit as $t \rightarrow \beta^-$. This can be easily obtained by means of \eqref{ineg}, which (by using Volterra integral equations) ensures that\footnote{\color{blue} See Appendix \ref{Sec::Appendix1} for every detail.}
 $$\|\varphi(t)-\varphi(s)\|\leq |\widetilde{\varphi}(t)-\widetilde{\varphi}(s)|\qquad
 \forall\,\,t,s\in [0,\beta). $$
 Since $E$ is complete, it is possible to define
 $$\varphi(\beta) := \lim_{t \rightarrow \beta^-}\varphi(t).$$
 In particular (see also \eqref{diseg.qqqq}),  we deduce that
 $\|\varphi(\beta)\| \leq \widetilde{\varphi}(\beta)$ as the point $(\beta, \varphi(\beta))$ actually belongs to
   $\Omega$ (since $(\beta, \widetilde{\varphi}(\beta)) \in \tild{\Omega}$).

   Moreover, due to \eqref{EDOfifi}, the function $\varphi(t)$ possesses its right-derivative at $t=\beta$, since $f(t,y)$ is continuous and $(\beta, \varphi(\beta))\in\Omega$.
   Again from the fact that $(\beta,\varphi(\beta)) \in \Omega$, there exists a function $\varphi_1$ which solves the differential equation $y' = f(t,y)$ and such that $\varphi_1(\beta) = \varphi(\beta)$. The function $\varphi_1$ is defined on an open set containing $\beta$, and it permits to prolong $\varphi$. This clearly contradicts the maximality of $\varphi$ on its domain, and thus $\tild{\beta} \leq \beta$.

   Finally, the change of variable $t \mapsto -t$ (and the ``evenly'' nature of hypothesis
    \eqref{ineg}) allows us to prove that, with the above argument, $\tild{\beta} \leq -\alpha$.
\end{proof}
\subsection{Application to the convergence domain of the Campbell-Hausdorff series.}\label{sec:applic}
 In the context of the Banach-Lie algebra $\Lieg$ of a Banach-Lie group (as in the previous sections),
 given $a,b$ sufficiently near $0$ in $\Lieg$,
 let us consider the function (see also \eqref{Hfunct})
 $$t \mapsto \phi(t) := \phi(a,t\,b), \quad \text{for $t$ in some neighborhood of $0\in \mathbb{R}$,}$$
 $\phi(t)$ taking its values in $\Lieg$. This function is differentiable, and
 (see also \eqref{S1IntCauchypb})
$$\begin{cases}
 \displaystyle \phi'(t) = \frac{\adj{\phi(t)}}{1 - \exp[-\adj{\phi(t)}]}
 \circ\frac{1 - \exp(-\adj{(tb)})}{\adj{(tb)}}(b) = \frac{\adj{\phi(t)}}{1 - \exp[-\adj{\phi(t)}]}(b) \\
 \phi(0) = a.
\end{cases}$$
 So $\phi(t)$ solves the Cauchy problem
 \begin{equation*}
 \begin{cases}
 y' = f(t,y) \\
 y(0) = a,
 \end{cases}\quad \text{with $f(t,y)=\dfrac{\adj{y}}{1 - \exp[-\adj{y}]}(b)$.}
\end{equation*}
 We denote by $\varphi(t)$ the maximal solution of the above Cauchy problem, for any given
  $(a,b)\in \Lieg\times \Lieg$ such that $\|a\|<2\pi$.
 In order to apply the results in the previous section, it is sufficient to estimate
 $$\left\|\frac{\adj{y}}{1 - \exp[-\adj{y}]}(b)\right\|,$$
 with the aim to obtain a majorizing function of the form $g(\|y\|)$ (see also \eqref{ineg}), such that
 $g(r)$ is non-decreasing for suitable positive values of $r$.
\begin{quote}
 \textcolor[rgb]{0.00,0.00,1.00}{[At this point, Mérigot uses some standard complex-variable
 technique to
 represent (in a quite explicit way, by exploiting Riemann's Zeta function\footnote{
 \color{blue}Indeed,
 it is a known fact from basic special-function theory that
 $$T_{2k}=\frac{B_{2k}}{(2k)!}=\frac{2(-1)^{k+1}}{(2\pi)^k}\,\zeta(2k),\quad k\in \mathbb{N}, $$
 where $\displaystyle\zeta(2k)=\sum_{n=1}^\infty \frac{1}{n^{2k}}$.
 Mérigot derives this very formula by standard residue-calculus technique.})
  the coefficients of the series expansion of
$$T(w):=\frac{w}{1 - \exp(-w)} = \sum_{n = 0}^{\infty}T_n\,w^n,\quad |w|<2\pi.$$
  The function $T$ is usually referred to, in the literature, as Todd's function.
    Compared to our previous notation in the proof of Proposition \ref{prop.bern}, one has
  $T(w)=-f(-w)$ and
   $T_n=-(-1)^n\alpha_n=(-1)^nB_n/n!$, where the $B_n$ are the Bernoulli numbers. We skip this part of the proof, which is not used elsewhere.]}
\end{quote}
 With the aim previously declared, let $G$ be the function defined as follows
 $$G(r) := 1 + \frac{r}{2} + \sum_{k = 1}^{\infty}|T_{2k}|\,r^{2k},\qquad r\in (-2\pi,2\pi),$$
 where the coefficients $T_{2k}$ come from the Maclaurin expansion of Todd's function\footnote{
 \color{blue}A simple calculation shows that
 $$\frac{r}{1 - \exp(-r)}=\frac{r}{2}+\frac{r/2}{\tanh(r/2)}, \qquad r\in \mathbb{R}. $$}
 $$T(r):=\frac{r}{1 - \exp(-r)} = 1+\frac{r}{2}+\sum_{k = 1}^{\infty} T_{2k}\,r^{2k}, \qquad r\in (-2\pi,2\pi). $$
\begin{quote}
 \color{blue}
 [With a standard trick of substituting $r$ with $ir$, \and takin into account known properties
 of the alternating sign of the coefficients $T_{2k}$, one finds that
 $$G(r)=2 + \frac{r}{2}\left(1 - \cot\left(\frac{r}{2}\right)\right), \qquad r\in (-2\pi,2\pi).] $$
\end{quote}
 One has (by using \eqref{submult})
 \begin{align*}
 &\left\|\frac{\adj{y}}{1 - \exp[-\adj{y}]}(b)\right\|
 = \left\| T(\adj{y})(b)\right\|=\left\|
 b+\frac{[y,b]}{2}+\sum_{k = 1}^{\infty} T_{2k}\,(\adj{y})^{2k}(b)
  \right\|\\
  &\leq \|b\|\left(1+\frac{\|y\|}{2}+\sum_{k = 1}^{\infty} |T_{2k}|\,\|y\|^{2k}\right)
  =\|b\|\,G(\|y\|)=:g(\|y\|).
\end{align*}
 The fact that $g$ is non-decreasing on $[0,2\pi)$ is an obvious consequence of the
 non-negativity of the coefficients in the expansion of $G$.

 In view of Proposition \ref{S1SS1MainProp}, let us consider the \emph{autonomous} Cauchy problem
$$\begin{cases}
 \widetilde{y}'= g(\widetilde{y})=\|b\|\,G(\widetilde{y}) \\
 \widetilde{y}(0) = \|a\|.
\end{cases}$$
 By the theory of separable ODEs, one knows that the supremum of the maximal domain of the
 maximal solution of the above Cauchy problem is
 $$\widetilde{\beta}=\int_{\|a\|}^{2\pi} \frac{1}{g(u)}\,\d u=
 \frac{1}{\|b\|}\int_{\|a\|}^{2\pi} \frac{1}{G (u)}\,\d u.$$
 We observe that $1<\widetilde{\beta}$
  if and only if
  $$\|a\|<2\pi\quad \text{and}\quad \|b\|<\int_{\|a\|}^{2\pi} \frac{1}{G (u)}\,\d u. $$
 Due to Proposition \ref{S1SS1MainProp}, time $t=1$ also belongs to the maximal domain
  of the maximal solution $\varphi(t)$. By using standard arguments\footnote{\color{blue}
  These arguments are not given in \cite{Merigot}; see our Appendix \ref{Sec::Appendix2}.}
  of analytic ODEs in Banach spaces, this also proves the convergence of the series representing $\phi(1)$, which
  is nothing but the Campbell-Hausdorff series \eqref{Hfunct} related to $(a,b)$.
\appendix
\section{Appendix - Study of the lifetime of the solution of a Cauchy problem.}\label{Sec::Appendix1}
 As anticipated in the Introduction, the aim of this first
 Appendix is to give a detailed proof of a slightly modified version
 of (Mérigot's) Proposition \ref{S1SS1MainProp}. As did by Mérigot in his manuscript,
 we split this proof into some preliminary lemmas, in order to deepen the analysis.
 Let us start by fixing the main notations and the hypotheses we shall assume throughout this section.

\begin{Def}\label{def.iniziale}
 We give the following definitions.
\begin{enumerate}
 \item
 Let $T$ be a fixed positive real number, or $T = \infty$, and let $(a,b)$ be an open interval of $\mathbb{R}$
 containing the origin (hence $a<0<b$). We denote by $\tild{\Omega}(T,a,b)$ the strip in $\mathbb{R}^2 \equiv \mathbb{R}_t\times\mathbb{R}_z$ defined as follows:
 $$\tild{\Omega}(T,a,b) := (-T,T)\times(a,b).$$

\item
 Let $(X,\|\cdot\|)$ be (real) Banach space. We denote by $\Omega(T,b)$ the subset of $\mathbb{R}_t\times X$ defined as follows:
 $$\Omega(T,b) := \left\{(t,y) \in \mathbb{R}\times X : (|t|,\|y\|) \in \tild{\Omega}(T,a,b)\right\}.$$
 Recalling the definition of $\tild{\Omega}(T,a,b)$, we can write (since $a<0$)
 $$\Omega(T,b) = \Big\{(t,y) \in \mathbb{R}\times X : |t| < T, \|y\| < b \Big\} = (-T,T)\times D(0,b),$$
 where, here and henceforth, $D(x,r)$ denotes the open ball in the normed space $X$ with center $x\in X$ and radius $r>0$.

\item
 $f: \Omega(T,b) \longrightarrow X$. We say that $f(t,y)$ satisfies the
 \textbf{hypothesis (LC)} if $f$ is locally Lipschitz continuous w.r.t.\,$y$ on $\Omega(T,b)$.
 This means that $f$ is continuous on its domain and that, for every compact subset $K$ contained in $\Omega(T,b)$, there exists a positive real constant $C = C(K)$ such that
 $$\|f(t,y) - f(t,y')\| \leq C\,\|y - y'\|, \quad \forall \,\, (t,y),\, (t,y') \in K.$$

\item
$g: \tild{\Omega}(T,a,b) \longrightarrow \mathbb{R}$.
 We say that $g$ satisfies the \textbf{hypothesis (MJ) w.r.t.\,$f$} (MJ stands for ``majorization''),  if the following three properties hold true:
\begin{itemize}
 \item [(i)] $g(t,z)$ is locally Lipschitz continuous w.r.t.\,$z$;

 \item [(ii)] for every fixed $t \in [0,T)$, the function
 $$[0,b) \ni z \mapsto g(t,z)$$
 is non-decreasing on its domain;

 \item [(iii)] one has the majorization
 \begin{subequations}
 \begin{equation}\label{MJ}
  \|f(t,y)\| \leq g(|t|,\|y\|), \quad \text{for every $(t,y) \in \Omega(T,b)$.}
 \end{equation}
 If this inequality is \emph{strict}, we say that $g$ satisfies the  \textbf{hypothesis (SMJ) w.r.t.\,$f$}:
 \begin{equation}\label{SMJ}
  \|f(t,y)\| < g(|t|,\|y\|), \quad \text{for every $(t,y) \in \Omega(T,b)$.}
 \end{equation}
 \end{subequations}
\end{itemize}
\end{enumerate}
\end{Def}
 \begin{Oss}\label{rem.nong}
 We explicitly remark that, if $g$ satisfies the  hypothesis (MJ) w.r.t.\,$f$, then
 $$g(t,z) \geq 0, \quad \forall \ (t,z) \in [0,T)\times[0,b).$$
 We are now in a position to start with the proof of the preliminary lemmas mentioned above, which will lead us to a quite easy proof of the main result of this section.
\end{Oss}
 All previous notations apply without the need to recall them every time.
\begin{Lm} \label{AppS1Lm1}
 Let us consider a continuous function
 $$g: \tild{\Omega}(T,a,b) \longrightarrow \mathbb{R},\qquad g=g(t,z),$$
 which is locally Lipschitz continuous w.r.t.\,$z$, and such that
\begin{equation}\label{posggg}
 g(t,z) > 0, \quad \forall \ (t,z) \in [0,T)\times[0,b).
\end{equation}
 Let us consider the Cauchy problem
$$\begin{cases}
z' = g(t,z) \\
z(0) = 0,
\end{cases}$$
and let
$\psi: (\tild{\alpha},\tild{\beta}) \longrightarrow \mathbb{R}$
 be its maximal solution. Then $\psi$ is strictly increasing on $[0,\tild{\beta})$ and
 in particular $\psi(t) > 0$, for every $t \in (0,\tild{\beta})$.
\end{Lm}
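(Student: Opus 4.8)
The plan is to prove the two assertions in the natural logical order: first that $\psi$ cannot leave the region $z\geq 0$ on which we have sign information, and then that strict monotonicity follows at once from the positivity of $g$. The hard part will be the first step, because hypothesis \eqref{posggg} only furnishes $g>0$ on the set $z\in[0,b)$; a priori nothing prevents the solution from dipping below the axis, where $g$ carries no sign. My strategy is therefore to isolate the hypothetical ``first return to zero'' of $\psi$ and to rule it out by a one-sided derivative computation.

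First I would record the standard regularity: since $g$ is continuous and locally Lipschitz with respect to $z$, the maximal solution $\psi$ exists, is unique, and is of class $C^1$ with $\psi'(t)=g(t,\psi(t))$; moreover its graph stays in $\tild{\Omega}(T,a,b)$, so that $\tild{\beta}\leq T$ and $\psi(t)\in(a,b)$ (in particular $\psi(t)<b$) for every $t$ in the domain. At the initial instant one has $\psi(0)=0$ and, by \eqref{posggg}, $\psi'(0)=g(0,0)>0$, so there is $\delta>0$ with $\psi(t)>0$ for all $t\in(0,\delta)$. Next I would set $t_0:=\sup\{s\in(0,\tild{\beta}) : \psi>0 \text{ on } (0,s)\}$, which satisfies $t_0\geq\delta>0$, and argue by contradiction that $t_0=\tild{\beta}$. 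If instead $t_0<\tild{\beta}$, then continuity forces $\psi(t_0)=0$ (it cannot be positive, otherwise $t_0$ would not be the supremum). Since $\psi>0$ on $(0,t_0)$, the left difference quotients satisfy $\frac{\psi(t_0)-\psi(t)}{t_0-t}=\frac{-\psi(t)}{t_0-t}\leq 0$ for $t<t_0$, whence $\psi'(t_0)\leq 0$; but $t_0<\tild{\beta}\leq T$ and $\psi(t_0)=0\in[0,b)$ give $\psi'(t_0)=g(t_0,0)>0$ by \eqref{posggg}, a contradiction. Hence $\psi>0$ on all of $(0,\tild{\beta})$.

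Finally, knowing that $\psi(t)\in[0,b)$ for every $t\in[0,\tild{\beta})\subseteq[0,T)$, hypothesis \eqref{posggg} yields $\psi'(t)=g(t,\psi(t))>0$ on the whole interval $[0,\tild{\beta})$, so $\psi$ is strictly increasing there; the asserted strict positivity $\psi(t)>0$ for $t\in(0,\tild{\beta})$ is then immediate (and was in fact already secured along the way). The only genuinely delicate point is the one flagged above, namely excluding a downward crossing of the axis, and the derivative-sign computation at the putative first zero disposes of it cleanly; everything else is routine.
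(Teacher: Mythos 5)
Your proof is correct and follows essentially the same continuation (supremum--plus--contradiction) scheme as the paper's: the only difference is that you track the first vanishing of $\psi$ itself and rule it out with a one-sided derivative sign at the putative first zero, whereas the paper tracks the supremum of times up to which $\psi'>0$ and contradicts its finiteness via $\psi(T)>0\Rightarrow\psi'(T)=g(T,\psi(T))>0$. Both arguments establish positivity and strict monotonicity with the same ingredients, merely in the opposite order.
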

\begin{proof}
 First of all, since
 $\psi'(0) =g(0,\psi(0))= g(0,0) > 0$ (due to hypothesis \eqref{posggg}),
 there exists a positive number $\delta \leq \tild{\beta}$ such that
 $\psi'(t) > 0$, for every $t \in [0,\delta)$.
 Hence $\psi$ is strictly increasing on $[0,\delta)$
 and
\begin{equation*}
    \text{ $\psi(t) > \psi(0) = 0$, for every $t \in (0,\delta)$.}
\end{equation*}
 Let us consider the following number
 $$T:=\sup\Big\{s\in (0,\widetilde{\beta}]\,\,:\,\, \psi'(t)>0 \,\,\text{on $[0,s)$}\Big\}. $$
 Note that $T\geq \delta$. By definition of supremum,  it is not difficult to recognize that $\psi'>0$ on $[0,T)$.
 If $T=\widetilde{\beta}$, there is nothing left to prove.
 Let us suppose by contradiction that $T<\widetilde{\beta}$.
 Since, in this case, $T\in [\delta,\tild{\beta})$,
 the strict monotonicity of $\psi$ on $(0,T)$ ensures that
 $\psi(T)$ is finite and positive. Hence
 $\psi'(T) =g(T,\psi(T))> 0$.
 A continuity argument ensures that $\psi'>0$ on an interval $[T,T+\epsilon)$ (for some $\epsilon>0$), which is
 clearly in contradiction with the definition of $T$. This ends the proof.
\end{proof}
\begin{Lm} \label{AppS1Lm2}
 Let
 $f: \Omega(T,b) \longrightarrow X$
 satisfy the hypothesis (LC), and let
 $g: \tild{\Omega}(T,a,b) \longrightarrow \mathbb{R}$
 satisfy the hypothesis (SMJ) w.r.t.\,$f$. Let us consider the two following Cauchy problems
  (the first is valued in $X$, the second in $\mathbb{R}$)
$$\begin{cases}
y' = f(t,y) \\
y(0) = 0,
\end{cases} \qquad
\begin{cases}
z' = g(t,z) \\
z(0) = 0,
\end{cases}$$
 and let,  respectively,
$$\varphi: (\alpha,\beta) \longrightarrow X, \qquad \psi: (\tild{\alpha},\tild{\beta}) \longrightarrow \mathbb{R}$$
 be the two maximal solutions of these problems.
 Then one has
\begin{equation} \label{AppS1EqStatLm2}
\|\varphi(t)\| < \psi(t), \quad \forall \ t \in (0,\beta)\cap(0,\tild{\beta}).
\end{equation}
\end{Lm}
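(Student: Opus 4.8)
The plan is to follow the strategy of Lemma~\ref{S1SS1Lm1}, now carried out rigorously within the framework of Definition~\ref{def.iniziale}. Throughout, I would use the Volterra integral reformulations
$$\varphi(t) = \int_0^t f(u,\varphi(u))\,\d u, \qquad \psi(t) = \int_0^t g(u,\psi(u))\,\d u,$$
which are valid on $(0,\beta)$ and $(0,\tild{\beta})$ respectively, since on these intervals the graphs $(u,\varphi(u))$ and $(u,\psi(u))$ lie in $\Omega(T,b)$ and $\tild{\Omega}(T,a,b)$; in particular $\|\varphi(u)\| < b$ and, as $a<0\le\|\varphi(u)\|$, the quantity $g(u,\|\varphi(u)\|)$ is well-defined, so that hypothesis (SMJ) may be applied along the trajectory of $\varphi$.

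First I would settle the inequality for small positive times. Evaluating \eqref{SMJ} at the origin gives $\|f(0,0)\| < g(0,0)$, so in particular $g(0,0)>0$ and, by Lemma~\ref{AppS1Lm1}, $\psi$ is strictly increasing and positive on $(0,\tild{\beta})$. Fixing a constant $c$ with $\|f(0,0)\| < c < g(0,0)$, the continuity of $f$, $g$, $\varphi$, $\psi$ furnishes $\tau_1>0$ such that $\|f(u,\varphi(u))\| < c < g(u,\psi(u))$ for $0\le u<\tau_1$; integrating and using the Volterra identities then yields $\|\varphi(t)\| \le \int_0^t\|f(u,\varphi(u))\|\,\d u < ct < \psi(t)$ for every $0<t<\tau_1$.

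The core of the argument is a prolongation step, which I expect to be the delicate point. Set
$$t_0 := \sup\Big\{s\in\big(0,\min\{\beta,\tild{\beta}\}\big) : \|\varphi(t)\|<\psi(t)\ \text{for all}\ t\in(0,s)\Big\} \ge \tau_1,$$
and suppose, for contradiction, that $t_0 < \min\{\beta,\tild{\beta}\}$. By continuity $\|\varphi(t_0)\|\le\psi(t_0)$, and $\|\varphi(u)\|\le\psi(u)$ for all $u\in(0,t_0]$. I would then chain
$$\|\varphi(t_0)\| \le \int_0^{t_0}\|f(u,\varphi(u))\|\,\d u < \int_0^{t_0} g(u,\|\varphi(u)\|)\,\d u \le \int_0^{t_0} g(u,\psi(u))\,\d u = \psi(t_0),$$
where the strict middle inequality comes from integrating the strict pointwise bound \eqref{SMJ} of two continuous functions over the nondegenerate interval $(0,t_0)$, and the last inequality uses the monotonicity of $g$ in its second argument together with $\|\varphi(u)\|\le\psi(u)$. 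Thus $\|\varphi(t_0)\|<\psi(t_0)$; since $\psi-\|\varphi\|$ is continuous and positive at $t_0$, the strict inequality persists on $(t_0,t_0+\delta)$ for some $\delta>0$, contradicting the definition of $t_0$. Hence $t_0=\min\{\beta,\tild{\beta}\}$, which is exactly \eqref{AppS1EqStatLm2}.

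The principal obstacle is precisely this passage to the endpoint $t_0$: one must guarantee that strictness survives there. The mechanism that makes it work is that integrating a strict pointwise inequality between continuous functions over an interval of positive length produces a strict inequality for the integrals, after which the monotonicity of $g$ transfers the majorization from $\|\varphi(u)\|$ to $\psi(u)$; the base case at $\tau_1$ is what provides the nonempty set over which the supremum is taken.
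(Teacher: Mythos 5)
Your argument is correct and follows essentially the same route as the paper's own proof: establish the strict inequality on a small right-neighbourhood of $0$, then propagate it over $(0,\min\{\beta,\tild{\beta}\})$ using the Volterra identities, the strictness of \eqref{SMJ}, and the monotonicity of $g$ in its second variable. The only cosmetic difference is in the base case, where the paper invokes the Mean Value Theorem while you insert a buffer constant $c$ into a direct integral estimate; both are sound.
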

\begin{proof}
 Let us consider the following map
$$h: \Omega(T,b)\times\tild{\Omega}(T,a,b) \longrightarrow \mathbb{R}, \quad h(t,y,s,z) := g(s,z) - \|f(t,y)\|.$$
 Since $f$ satisfies the  hypothesis (LC) and $g$ satisfies the  hypothesis (SMJ) w.r.t.\,$f$, the func\-tion $h$ is continuous on its domain and
$$h(0,0,0,0) = g(0,0) - \|f(0,0)\| \stackrel{\eqref{SMJ}}{>} 0.$$
 By continuity, one can then find a positive real number, say $\tau$, such that
$$h(t,y,s,z) = g(s,z) - \|f(t,y)\| > 0,$$
 for all couples $(t,y) \in \Omega(T,b)$ and  $(s,z) \in \widetilde{\Omega}(T,a,b)$ such that
$$|t|,\, |s|,\, \|y\|,\, |z|\in [0,\tau).$$
 Summing up,
 \begin{equation}\label{riassu}
     \|f(t,y)\| < g(s,z),\quad \text{whenever $0\leq |t|,\, |s|,\,\|y\|,\, |z| < \tau$.}
 \end{equation}
 Let us now consider the two functions $\varphi$ and $\psi$.
 Due to \eqref{SMJ}, we are entitled to apply Lemma \ref{AppS1Lm1} and infer that $\psi$ is non-negative and strictly increasing on $[0,\tild{\beta})$.
 Another continuity argument ensures that, since
 $$\varphi(0) =0\in X,\quad \text{and}\quad \psi(0) = 0\in\mathbb{R},$$
 there exists a positive real number $\tau_1$, which we can assume to be smaller than $\tau$, such that
\begin{equation}\label{riassu2}
 0\leq \|\varphi(t)\|,\,\psi(t) < \tau, \qquad \forall\,\, t \in [0,\beta)\cap[0,\tild{\beta}),\,\,\,t < \tau_1.
\end{equation}
 Let us fix any arbitrary $t_0 \in [0,\beta)\cap[0,\tild{\beta})$ such that $0 < t_0 < \tau_1.$
 By the well-known Mean Value Theorem (for Banach spaces), there exist $\xi,\xi' \in (0,t_0)$ such that
\begin{align}
 \|\varphi(t_0)\| & = \|\varphi(t_0) - \varphi(0)\| \leq \|\varphi'(\xi)\|\cdot t_0 = \|f(\xi,\varphi(\xi))\|\cdot t_0, \label{MEANNN}\\
 \psi(t_0) & = \psi(t_0) - \psi(0) = \psi'(\xi')\cdot t_0 = g(\xi',\psi(\xi'))\cdot t_0,\label{MEANNN3}
\end{align}
 and thus (gathering together \eqref{riassu} and \eqref{riassu2} and the fact that $\tau_1<\tau$)
\begin{align*}
\|\varphi(t_0)\| \stackrel{\eqref{MEANNN}}{\leq} \|f(\xi,\varphi(\xi))\|\cdot t_0 \stackrel{\eqref{riassu}}{<} g(\xi', \psi(\xi'))\cdot t_0 \stackrel{\eqref{MEANNN3}}{=} \psi(t_0).
\end{align*}
 Note the crucial r\^ole of the equality in \eqref{MEANNN3}.

 By the arbitrariness of $t_0\in (0,\min\{\beta,\widetilde{\beta},\tau_1\})$, we have
\begin{equation*}
 \|\varphi(t)\| < \psi(t), \quad \forall \ t \in (0,\beta)\cap(0,\tild{\beta}),\,\,\,t < \tau_1.
\end{equation*}
 We are now ready to derive the desired inequality \eqref{AppS1EqStatLm2},
 essentially as in the proof of Lemma \ref{AppS1Lm1}.
 Indeed, let us assume that there exists a real number $t_0 \in (0,\beta)\cap(0,\tild{\beta})$ such that
\begin{equation} \label{cresce}
 \|\varphi(t)\| < \psi(t), \quad \forall \ t \in (0,t_0),
\end{equation}
 and let us prove that this inequality also holds true for $t = t_0$. For every $t \in (0,t_0)$ one has
\begin{gather}\label{MEANNN2}
 \begin{split}
 \|\varphi(t)\| &=\bigg\| \varphi(0)+\int_0^t f(u,\varphi(u))\,\d u\bigg\|
   \leq \int_0^t\|f(u,\varphi(u))\|\,\d u \\
   &\stackrel{\eqref{SMJ}}{<} \int_0^tg(u,\|\varphi(u)\|)\,\d u \leq  \int_0^tg(u,\psi(u))\,\d u = \psi(t).
\end{split}
\end{gather}
 In the second ``$\leq$'' inequality sign we used \eqref{cresce} and the non-decreasing monotonicity of $g$ w.r.t.\,its second variable
 (which is a part of hypothesis (SMJ)).

  Letting $t \rightarrow t_0^-$, the strict inequality  in \eqref{MEANNN2} still remains true (since this involves Riemann integrals
  of continuous functions), so that one has
$$\|\varphi(t_0)\| \leq \int_0^{t_0}\|f(u,\varphi(u))\|\,\d u < \int_0^{t_0}g(u,\|\varphi(u)\|)\,\d u \leq  \int_0^{t_0}g(u,\psi(u))\,\d u = \psi(t_0).$$
 Hence \eqref{cresce} is valid up to $t_0$ comprised.
 Due to the connectedness of $(0,\beta)\cap(0,\tild{\beta})$ (and the continuity of $\|\varphi(t)\|$, $\psi(t)$), the proof of the lemma
 easily follows.
\end{proof}
 We now want to remove the strict assumption from hypothesis (SMJ) in Lemma
  \ref{AppS1Lm2}.
\begin{Lm} \label{AppS1Lm3}
Let
 $f: \Omega(T,b) \longrightarrow X$
 satisfy the (LC) hypothesis and let $g: \tild{\Omega}(T,a,b) \longrightarrow \mathbb{R}$
 satisfy the (MJ) hypothesis  w.r.t.\,$f$. Let us consider the two following Cauchy problems
$$\begin{cases}
y' = f(t,y) \\
y(0) = 0,
\end{cases}
\qquad
\begin{cases}
z' = g(t,z) \\
z(0) = 0,
\end{cases}$$
and let,  respectively,
$$\varphi: (\alpha,\beta) \longrightarrow X, \qquad \psi: (\tild{\alpha},\tild{\beta}) \longrightarrow \mathbb{R}$$
 be the two maximal solutions of these problems.
Then one has
\begin{equation} \label{S1EqStatLm3}
\|\varphi(t)\| \leq \psi(t), \quad \forall \ t \in [0,\beta)\cap[0,\tild{\beta}),
\end{equation}
and thus, in particular,
$$\psi(t) \geq 0, \quad \forall \ t \in [0,\beta)\cap[0,\tild{\beta}).$$
\end{Lm}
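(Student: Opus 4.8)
The plan is to reduce the non-strict case to the strict one already settled in Lemma \ref{AppS1Lm2}, by a perturbation-and-limit argument (this is the Appendix counterpart of Lemma \ref{S1SS1Lm2} in the main text). For every $\epsilon > 0$ I would introduce the perturbed majorant
$$g_\epsilon(t,z) := g(t,z) + \epsilon\,(1 + z), \qquad (t,z) \in \tild{\Omega}(T,a,b),$$
and first check that $g_\epsilon$ satisfies hypothesis (SMJ) w.r.t.\,$f$. Property (i) is clear, since $g$ is locally Lipschitz in $z$ and $z \mapsto \epsilon(1+z)$ is affine; property (ii) holds because $g$ is non-decreasing in $z$ and the added term is strictly increasing; finally the \emph{strict} majorization follows from (MJ) together with Remark \ref{rem.nong}: for every $(t,y)\in \Omega(T,b)$ one has $\|f(t,y)\| \leq g(|t|,\|y\|) < g(|t|,\|y\|) + \epsilon(1+\|y\|) = g_\epsilon(|t|,\|y\|)$, the strictness being guaranteed by $\epsilon(1+\|y\|) > 0$.

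Denote by $\psi_\epsilon : (\tild{\alpha}_\epsilon, \tild{\beta}_\epsilon) \to \mathbb{R}$ the maximal solution of $z' = g_\epsilon(t,z)$, $z(0) = 0$, with the convention $\psi_0 = \psi$. Applying Lemma \ref{AppS1Lm2} to the pair $(f, g_\epsilon)$ I obtain
$$\|\varphi(t)\| < \psi_\epsilon(t), \qquad \forall\,\, t \in (0,\beta) \cap (0, \tild{\beta}_\epsilon).$$
It then remains to let $\epsilon \to 0^+$ and exchange the limit with the inequality. Fixing $t \in (0,\beta)\cap(0,\tild{\beta})$, the key point is that, since $g_\epsilon \to g$ locally uniformly on $\tild{\Omega}(T,a,b)$ as $\epsilon \to 0^+$ (indeed $g_\epsilon - g = \epsilon(1+z)$), the standard theorem on continuous dependence of solutions on the data yields a threshold $\epsilon_1 = \epsilon_1(t) > 0$ such that $t \in (0,\tild{\beta}_\epsilon)$ for all $\epsilon < \epsilon_1$ and $\psi_\epsilon(t) \to \psi(t)$. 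Passing to the limit in the strict inequality above then gives $\|\varphi(t)\| \leq \psi(t)$ for every $t \in (0,\beta)\cap(0,\tild{\beta})$; the case $t = 0$ is trivial, since $\|\varphi(0)\| = 0 = \psi(0)$, and this establishes \eqref{S1EqStatLm3}. The final assertion $\psi(t) \geq 0$ is then immediate, because $\psi(t) \geq \|\varphi(t)\| \geq 0$.

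The main obstacle is precisely the passage to the limit, and more specifically the lower semicontinuity of the maximal existence time, i.e.\,the claim that for each fixed $t < \tild{\beta}$ one has $t < \tild{\beta}_\epsilon$ for all sufficiently small $\epsilon$. Since $g_\epsilon > g$ forces, by comparison, $\psi_\epsilon \geq \psi$, the perturbed solution could a priori reach the boundary of its domain earlier, so one cannot simply assume $\tild{\beta}_\epsilon \geq \tild{\beta}$. The correct justification is the continuous-dependence theorem applied on a compact time interval $[0,t^*]$ with $t < t^* < \tild{\beta}$: on a tubular neighborhood of the graph of $\psi$ restricted to $[0,t^*]$ the field $g$ is Lipschitz in $z$ and $g_\epsilon \to g$ uniformly, whence $\psi_\epsilon$ exists on $[0,t^*]$ for small $\epsilon$ and converges uniformly to $\psi$ there. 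I would isolate (or cite) this ODE fact rather than re-derive it, as it is the only non-elementary ingredient of the proof.
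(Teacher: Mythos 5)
Your proposal is correct and follows essentially the same perturbation-and-limit strategy as the paper's proof: replace $g$ by a strictly larger majorant satisfying (SMJ), apply Lemma \ref{AppS1Lm2}, and pass to the limit $\epsilon\to 0^+$ using the lower semicontinuity of the maximal existence time $\tild{\beta}_\epsilon$ together with continuous dependence of $\psi_\epsilon$ on $\epsilon$. The only (immaterial) difference is the choice of perturbation, $\epsilon(1+z)$ instead of the constant $\epsilon$ used in the Appendix (the main-text Lemma \ref{S1SS1Lm2} in fact uses your choice), and your closing discussion correctly isolates the one non-elementary ODE ingredient that the paper likewise cites without proof.
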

\begin{proof}
 Let us fix a real number $\epsilon$ and let $h_{\epsilon}$ be the function defined as follows
 $$h_{\epsilon}: \tild{\Omega}(T,a,b) \longrightarrow \mathbb{R}, \quad h_{\epsilon}(t,z) := g(t,z) + \epsilon.$$
It is immediate to see that, if $\epsilon > 0$, the function $h_\epsilon$ satisfies the hypothesis (SMJ) w.r.t.\,$f$, and thus, denoting by $\psi_{\epsilon}$ the maximal solution of the Cauchy problem
$$\begin{cases}
z' = h_\epsilon(t,z) \\
z(0) = 0,
\end{cases}$$
defined on the open interval $(\tild{\alpha}_{\epsilon}, \tild{\beta}_{\epsilon})$, due to Lemma \ref{AppS1Lm2} we have
\begin{equation} \label{AppS1EqProofLm3}
\|\varphi(t)\| < \psi_{\epsilon}(t), \quad \forall \ t \in (0, \beta) \cap (0,\tild{\beta}_{\epsilon}),
\end{equation}
 for any arbitrary $\epsilon > 0$.
 Let now $t_0$ be arbitrarily fixed in $(0,\beta)\cap(0,\tild{\beta})$. Since the function $h_{\epsilon}$ is continuous w.r.t.\,the variable $\epsilon$, from well known
 general ODE theory, we know that the function
 $\epsilon \mapsto \tild{\beta}_{\epsilon}$
 is lower semi-continuous, and thus
$$\tild{\beta} = \tild{\beta}_0 \leq \liminf_{\epsilon \rightarrow 0}\tild{\beta}_{\epsilon}.$$
 From the fact that $t_0 < \tild{\beta}$, there follows the existence of a positive $\epsilon_1 = \epsilon_1(t_0)$ such that
 $$t_0 \in (0,\beta)\cap(0,\tild{\beta}_{\epsilon}), \quad \text{for every $\epsilon$ such that $|\epsilon| < \epsilon_1$,}$$
and thus, from \eqref{AppS1EqProofLm3}, we get
 $$\|\varphi(t_0)\| < \psi_{\epsilon}(t_0), \quad \text{whenever $0 < \epsilon < \epsilon_1$.}$$
 Letting $\epsilon \rightarrow 0^+$, since $\psi_{\epsilon}$ depends continuously on the parameter $\epsilon$, we get
$$\|\varphi(t_0)\| \leq \lim_{\epsilon \rightarrow 0^+}\psi_{\epsilon}(t_0) = \psi(t_0).$$
 From the arbitrariness of $t_0\in (0,\beta)\cap(0,\tild{\beta})$, and as $\|\varphi(0)\|= \psi(0) = 0$, we obtain \eqref{S1EqStatLm3}.
\end{proof}
 We are now in a position to state and prove the main result of this Appendix.
\begin{Th} \label{AppS1MainTh}
 Let
 $f: \Omega(T,b) \longrightarrow X$
 satisfy hypothesis (LC) and let
 $g: \tild{\Omega}(T,a,b) \longrightarrow \mathbb{R}$
 satisfy  hypothesis (MJ) w.r.t.\,$f$. Let us consider the two following Cauchy problems
$$\begin{cases}
y' = f(t,y) \\
y(0) = 0,
\end{cases} \qquad
\begin{cases}
z' = g(t,z) \\
z(0) = 0,
\end{cases}$$
 and let, respectively,
 $$\varphi: (\alpha,\beta) \longrightarrow X, \qquad \psi: (\tild{\alpha},\tild{\beta}) \longrightarrow \mathbb{R}$$
 be the two maximal solutions of these problems. Then one has
\begin{itemize}
\item [\emph{(i)}] the interval $[0,\tild{\beta})$ is contained in $[0,\beta)$;
\item [\emph{(ii)}] for every $t \in [0,\tild{\beta})$, one has
$$\|\varphi(t)\| \leq \psi(t).$$
\end{itemize}
\end{Th}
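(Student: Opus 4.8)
The plan is to reduce the theorem to the two preceding lemmas, with the only genuinely new content being the continuation argument behind statement (i). Observe first that statement (ii) is \emph{almost} Lemma \ref{AppS1Lm3}: that lemma already gives $\|\varphi(t)\| \leq \psi(t)$ on the \emph{common} interval $[0,\beta)\cap[0,\tild{\beta})$. Hence, once (i) is established --- i.e.\ $\tild{\beta} \leq \beta$, so that $[0,\beta)\cap[0,\tild{\beta}) = [0,\tild{\beta})$ --- assertion (ii) follows immediately. So the whole proof concentrates on showing $\tild{\beta} \leq \beta$.

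To prove (i) I would argue by contradiction, assuming $\beta < \tild{\beta}$. Since $\psi' = g(t,\psi) \geq 0$ (Remark \ref{rem.nong}), the function $\psi$ is non-decreasing on $[0,\tild{\beta})$; because $\beta < \tild{\beta}$, the value $\psi(\beta)$ is finite, and as $\psi$ is valued in $[0,b)$ throughout its domain we have $\psi(\beta) < b$. Moreover $\beta < \tild{\beta} \leq T$, hence $\beta < T$ as well. On $[0,\beta) \subseteq [0,\tild{\beta})$, Lemma \ref{AppS1Lm3} yields $\|\varphi(t)\| \leq \psi(t) \leq \psi(\beta) < b$, which confines $\varphi$ to a bounded region lying strictly inside the slab $\Omega(T,b)$.

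The key step is then to show that $\varphi$ extends continuously up to $\beta$ and can be continued past it. Passing to the Volterra (integral) form of the first Cauchy problem and using the majorization \eqref{MJ} together with the monotonicity of $g$ in its second variable, for $0 \le s < t < \beta$ one obtains
$$\|\varphi(t) - \varphi(s)\| \leq \int_s^t \|f(u,\varphi(u))\|\,\d u \leq \int_s^t g(u,\psi(u))\,\d u = \psi(t) - \psi(s),$$
exactly as in the proof of Lemma \ref{AppS1Lm3}. Since $\psi$ is continuous at $\beta$, the right-hand side tends to $0$ as $s,t \to \beta^-$, so $\varphi$ satisfies the Cauchy criterion there; by completeness of $X$ the limit $\varphi(\beta) := \lim_{t\to\beta^-}\varphi(t)$ exists, with $\|\varphi(\beta)\| \leq \psi(\beta) < b$. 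Consequently $(\beta,\varphi(\beta)) \in \Omega(T,b)$.

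Finally, because $f$ satisfies hypothesis (LC) and $(\beta,\varphi(\beta))$ lies in the open set $\Omega(T,b)$, the local existence theorem for locally Lipschitz right-hand sides produces a solution of $y' = f(t,y)$ through $(\beta,\varphi(\beta))$ on some interval $(\beta-\delta,\beta+\delta)$; gluing it to $\varphi$ yields a solution properly extending $\varphi$ beyond $\beta$, contradicting the maximality of $\beta$. Hence $\tild{\beta} \leq \beta$, which is (i), and then (ii) follows from Lemma \ref{AppS1Lm3} as explained above. I expect the continuation step to be the main obstacle: one must verify that the a priori bound keeps the terminal point $(\beta,\varphi(\beta))$ in the \emph{interior} of the domain --- both $\|\varphi(\beta)\| < b$ and $\beta < T$ --- since otherwise the local existence theorem could not be invoked to restart the solution.
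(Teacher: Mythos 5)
Your proposal is correct and follows essentially the same route as the paper's proof: argue by contradiction assuming $\beta<\tild{\beta}$, use Lemma \ref{AppS1Lm3} and the monotonicity of $\psi$ to confine $\varphi$, derive the Cauchy criterion at $\beta^-$ from the Volterra form and the majorization, and then contradict maximality by restarting the solution at $(\beta,\varphi(\beta))\in\Omega(T,b)$. The point you flag as the main obstacle --- checking that $(\beta,\varphi(\beta))$ lies in the interior of the domain via $\|\varphi(\beta)\|\leq\psi(\beta)<b$ and $\beta<T$ --- is exactly the observation the paper makes (``since $(\beta,\psi(\beta))\in\tild{\Omega}(T,a,b)$'').
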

\begin{proof}
 We prove assertion (i) by contradiction. Let us assume that $\beta<\tild{\beta}$.
 This means that the intersection of $[0,\beta)$ and $[0,\tild{\beta})$ equals $[0,\beta)$, and thus, due to Lemma \ref{AppS1Lm3}, we have
\begin{equation} \label{AppS1Eq1ProofMainTh}
\|\varphi(t)\| \leq \psi(t) \quad \forall \ t \in [0, \beta).
\end{equation}
 Moreover, due to hypothesis (MJ), from Remark \ref{rem.nong} we get
 \begin{equation}\label{gggdereq}
    g(t,z) \geq 0, \quad \forall \ (t,z) \in [0,T)\times[0,b).
 \end{equation}
 Since $\psi$ is non-negative on $[0,\beta)$ (see \eqref{AppS1Eq1ProofMainTh}), inequality \eqref{gggdereq} jointly with
 the ODE solved by $\psi$ (i.e., $\psi'(t)=g(t,\psi(t))$), we get $\psi'(t)\geq 0$ on
 $[0,\beta)$, and thus (note that $\beta$ belongs to the maximal domain of $\psi$ by our assumption $\beta<\widetilde{\beta}$)
\begin{equation} \label{AppS1Eq2ProofMainTh}
\psi(t) \leq \psi(\beta), \quad \forall \ t \in [0,\beta).
\end{equation}
Let now $\epsilon$ be a positive real number such that the \emph{compact} neighborhood of $\beta$
 $$I_{\epsilon}(\beta) := [\beta-\epsilon, \beta+\epsilon]$$
 is contained in $[0,\tild{\beta})$. For every $s,t \in I_{\epsilon}(\beta)$ such that $t,s < \beta$, one has
\begin{align*}
 \|\varphi(t) - \varphi(s)\| & \leq \left|\int_t^s\|f(u,\varphi(u))\|\,\d u\right|
 \stackrel{\eqref{MJ}}{\leq}
 \left|\int_t^s g(u,\|\varphi(u)\|)\,\d u\right| \\
 &\leq \left|\int_t^sg(u,\psi(u))\,\d u\right|
 =\left|\int_t^s \psi'(u)\,\d u\right|
  =|\psi(t)-\psi(s)| \\
  &\leq \left(\max_{[\beta-\epsilon,\beta]}|\psi'|\right)\cdot |t - s|.
\end{align*}
 In the third ``$\leq$'' sign, we used \eqref{AppS1Eq1ProofMainTh}, together with the non-increasing
 monotonicity of $g$ w.r.t.\,its second argument, which is part of the (MJ) hypothesis.

 With the clear meaning of the following symbol, we have the Cauchy condition
$$\lim_{s,t \rightarrow \beta^-}\|\varphi(t) - \varphi(s)\| = 0.$$
 Since the space $X$ is complete, this grants the existence of  $\lim_{t \rightarrow \beta^-}\varphi(t)$ in $X$. We set
$$\varphi(\beta) := \lim_{t \rightarrow \beta^-}\varphi(t).$$
 From inequality \eqref{AppS1Eq1ProofMainTh}, it follows that
 $\|\varphi(\beta)\| \leq \psi(\beta)$, and thus, since the point $(\beta,\psi(\beta))$
 belongs to $\tild{\Omega}(T,a,b)$, the point $(\beta, \varphi(\beta))$ belongs to $\Omega(T,b)$.
 Moreover, from the fact that $f$ is continuous on $\Omega(T,b)$, it follows that the function $\varphi$ has the right-derivative at $\beta$, and
$$\varphi'(\beta) = \lim_{t \rightarrow \beta^-}\varphi'(t) = \lim_{t \rightarrow \beta^-}f(t,\varphi(t)) = f(\beta,\varphi(\beta)).$$
Let us now consider the Cauchy problem
$$\begin{cases}
y' = f(t,y) \\
y(\beta) = \varphi(\beta),
\end{cases}$$
 which is well-posed since $(\beta,\varphi(\beta)) \in \Omega(T,b)$. There exists a local solution
 $\varphi_1$ of this problem, defined in a small neighborhood
 $(\beta - \delta,\beta + \delta)$ of $\beta$, and since $\varphi$ is a right-solution of this problem as well,  we
  obtain a prolongation of $\varphi$ beyond $\beta$, which is clearly a contradiction with the
  maximality of $\varphi$.

  This proves that $\widetilde{\beta} \leq  \beta$, and from inequality \eqref{S1EqStatLm3} in Lemma \ref{AppS1Lm3}
  we directly derive the proof of statement (ii) of the present theorem.
\end{proof}
We conclude this Appendix by proving a simple corollary of Theorem \ref{AppS1MainTh}, which essentially shows that the initial conditions
$$y(0) = 0, \qquad z(0) = 0,$$
can be replaced with any conditions of the form
$$y(0) = x, \qquad z(0) = \|x\|,$$
for some $x \in \Omega(T,b).$
\begin{Cor} \label{AppS1CorInCond}
Let
 $f: \Omega(T,b) \longrightarrow X$
 satisfy hypothesis (LC) and let
 $g: \tild{\Omega}(T,a,b) \longrightarrow \mathbb{R}$
 satisfy hypothesis (MJ) w.r.t.\,$f$. Let us consider, for a fixed $x \in \Omega(T,b)$, the Cauchy problems
$$\begin{cases}
y' = f(t,y) \\
y(0) = x,
\end{cases}
\qquad
\begin{cases}
z' = g(t,z) \\
z(0) = \|x\|,
\end{cases}$$
and let, respectively,
$$\varphi: (\alpha,\beta) \longrightarrow X, \qquad \psi: (\tild{\alpha},\tild{\beta}) \longrightarrow \mathbb{R}$$
 be their maximal solutions. Then one has
 $\tild{\beta}\leq \beta$ and $\|\varphi(t)\| \leq \psi(t)$, for every $t \in [0,\tild{\beta})$.
\end{Cor}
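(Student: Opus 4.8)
The plan is to deduce the corollary from Theorem \ref{AppS1MainTh} by a translation that reduces the non-zero data $y(0)=x$, $z(0)=\|x\|$ to the null data treated there. The condition $x\in\Omega(T,b)$ means $\|x\|<b$, so I set $b':=b-\|x\|>0$ and $a':=a-\|x\|<0$, ensuring that the strips $\Omega(T,b')$ and $\tild{\Omega}(T,a',b')$ are well defined. On these I would introduce the \emph{translated} data
$$F(t,Y):=f(t,Y+x),\qquad G(t,Z):=g(t,Z+\|x\|),$$
which are legitimate because $\|Y\|<b'$ gives $\|Y+x\|\le\|Y\|+\|x\|<b$, while $Z\in(a',b')$ gives $Z+\|x\|\in(a,b)$.

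The first step is to verify that $F$ satisfies hypothesis (LC) and that $G$ satisfies hypothesis (MJ) w.r.t.\ $F$. Local Lipschitz-continuity and the monotonicity of the second variable are inherited verbatim from $f$ and $g$, since a constant translation of the argument alters neither. The only point requiring care is the majorization: for $(t,Y)\in\Omega(T,b')$ I would estimate
$$\|F(t,Y)\|=\|f(t,Y+x)\|\le g(|t|,\|Y+x\|)\le g(|t|,\|Y\|+\|x\|)=G(|t|,\|Y\|),$$
where the first inequality is \eqref{MJ} for the pair $(f,g)$ and the second combines the triangle inequality with the monotonicity of $g(|t|,\cdot)$, all intermediate arguments remaining in $[0,b)$. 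This is the mechanism by which the off-center ball gets absorbed into the majorant.

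Finally I would identify the maximal solutions of the translated problems. By uniqueness, $Y(t):=\varphi(t)-x$ and $Z(t):=\psi(t)-\|x\|$ solve, respectively, $Y'=F(t,Y)$ with $Y(0)=0$ and $Z'=G(t,Z)$ with $Z(0)=0$. The crucial dissymmetry of the domains is the following: the translation $z\mapsto z-\|x\|$ carries $\tild{\Omega}(T,a,b)$ \emph{bijectively} onto $\tild{\Omega}(T,a',b')$, so $Z$ has exactly the same maximal domain $(\tild{\alpha},\tild{\beta})$ as $\psi$; whereas the shifted ball $D(0,b')$ lies strictly inside the original geometry, so the maximal domain $(\alpha_Y,\beta_Y)$ of $Y$ satisfies $\beta_Y\le\beta$ (an extension of $Y$ past $\beta$ would, through $\varphi=Y+x$, extend $\varphi$ inside $\Omega(T,b)$, contradicting its maximality). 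Applying Theorem \ref{AppS1MainTh} to $(F,G)$ then yields $\tild{\beta}\le\beta_Y\le\beta$ together with $\|\varphi(t)-x\|\le\psi(t)-\|x\|$ on $[0,\tild{\beta})$; one last triangle inequality $\|\varphi(t)\|\le\|\varphi(t)-x\|+\|x\|\le\psi(t)$ closes the argument. I expect the main obstacle to be precisely this domain bookkeeping: making rigorous that the Banach-valued lifetime can only shrink under translation while the scalar lifetime is exactly preserved, which is what converts the Theorem's conclusion into the desired $\tild{\beta}\le\beta$.
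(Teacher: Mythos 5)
Your proposal is correct and follows essentially the same route as the paper: translate by $x$ and $\|x\|$, restrict the Banach-valued problem to the smaller centered ball $D(0,b-\|x\|)$, check (LC) and (MJ) for the translated pair via the triangle inequality and the monotonicity of $g(|t|,\cdot)$, apply Theorem \ref{AppS1MainTh}, and undo the translation. Your explicit observation that the scalar lifetime is exactly preserved while the Banach-valued lifetime can only shrink (so that $\tild{\beta}\leq\beta_Y\leq\beta$) is precisely the domain bookkeeping the paper carries out with its restricted function $h$ on $\Omega(T,a_1,b_1)$.
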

\begin{proof}
 Let us consider the maps $\tau_x$ and $\tau_{\|x\|}$ defined by
\begin{align*}
\tau_x : X \longrightarrow X, \quad & \tau_x(y) := y - x, \\
\tau_{\|x\|} : \mathbb{R} \longrightarrow \mathbb{R}, \quad & \tau_{\|x\|}(z) := z - \|x\|.
\end{align*}
 Let $A,\tild{A}$ be the subsets of $X$ and $\mathbb{R}$, respectively, given by
\begin{align*}
A & := \Big\{(t,y) \in \mathbb{R}\times X : (t,\tau_x^{-1}(y)) \in \Omega(T,b)\Big\}, \\
\tild{A} & := \Big\{(t,z) \in \mathbb{R}\times \mathbb{R} : (t,\tau_{\|x\|}^{-1}(z)) \in \tild{\Omega}(T,a,b)\Big\}.
\end{align*}
We explicitly remark that, since $x \in \Omega(T,b)$, the origin of $\mathbb{R}^2$ belongs to $\tild{A}$ and the origin of $\mathbb{R}\times X$ belongs to $A$. Let now $f_1, g_1$ be the two functions defined as follows
\begin{align*}
f_1: A \longrightarrow X, \quad & f_1(t,y) := f(t,y+x), \\
g_1 : A \longrightarrow \mathbb{R}, \quad & g_1(t,z) := g(t,z + \|x\|),
\end{align*}
and let us consider the two Cauchy problems
$$\begin{cases}
y' = f_1(t,y) \\
y(0) = 0,
\end{cases}
\qquad
\begin{cases}
z' = g_1(t,z) \\
z(0) = 0.
\end{cases}$$
It is immediate to recognize that the functions
\begin{align*}
\varphi_1 : (\alpha,\beta) \longrightarrow X, \quad & \varphi_1(t) := \varphi(t) - x, \\
\psi_1 :(\tild{\alpha},\tild{\beta}) \longrightarrow \mathbb{R}, \quad & \psi_1(t) := \psi(t) - \|x\|,
\end{align*}
 are the two maximal solutions of the above problems, respectively.
 We can apply Theorem \ref{AppS1MainTh} to $\varphi_1$ and to $\psi_1.$
 In order to do this, we have to check that all the hypotheses of this theorem are satisfied. First of all, we remark that
$$\tild{A} = \tild{\Omega}(T,a-\|x\|,b-\|x\|) \equiv \tild{\Omega}(T,a_1,b_1).$$
 On the other hand, the set $A$ is \emph{not} equal to $\Omega(T,a_1,b_1)$, but the following inclusion holds true:
 $A \supseteq \Omega(T,a_1,b_1)$.
 We can then consider the function $f_1$ \emph{restricted} to $\Omega(T,a_1,b_1)$, which we denote by $h$. One has:

\medskip$\bullet$\,\,the function $h$ satisfies hypothesis (LC);

\medskip$\bullet$\,\,the function $g_1$ satisfies hypothesis (MJ) w.r.t.\,$h$.
 Indeed, it is obvious that $g_1$ is locally Lipschitz continuous w.r.t.\,its second variable.
 For every fixed $t \in [0,T)$, the map
 $$[0,b_1) \ni z \mapsto g_1(t,z) = g(t,z+\|x\|)$$
 is non-decreasing (being $\|x\| \geq 0$), and for every $(t,y) \in \Omega(T,a_1,b_1)$ one has (since $\|y+x\|$ and $\|y\|+\|x\|$ are positive real numbers in $[0,b)$)
$$\|h(t,y)\| = \|f(t,y+x)\| \leq g(|t|,\|y+x\|) \leq g(|t|,\|y\|+\|x\|) = g_1(|t|,\|y\|).$$
 Finally, since the domain of the maximal solution $\phi$ of the Cauchy problem
$$\begin{cases}
y' = h(t,y) \\
y(0) = 0,
\end{cases}$$
is an open interval $I = (\xi,\eta)$ contained in $(\alpha,\beta)$, and since
 $\phi(t) = \varphi_1(t)$ for every $t \in I$,
it follows from Theorem \ref{AppS1MainTh} that
 $\widetilde{\beta}\leq \eta$ and $\|\phi(t)\| \leq \psi_1(t)$, for every $t \in [0,\widetilde{\beta})$.

 This yields $[0,\widetilde{\beta}) \subseteq [0,\beta)$,
 and
 \begin{align*}
\|\varphi(t)\| & \leq \|\varphi_1(t)\| + \|x\| = \|\phi(t)\| + \|x\| \leq \psi_1(t) + \|x\| = \psi(t),
\end{align*}
 for any $t \in [0,\widetilde{\beta})$. This ends the proof.
\end{proof}
\section{Appendix - Application to the convergence domain of the CBHD series.}
\label{Sec::Appendix2}
 We end the dissertation with a second Appendix, in which we want to show
 more closely how Theorem \ref{AppS1MainTh} can be used to obtain an enlarged domain for the (homogenous) Campbell-Baker-Hausdorff-Dynkin series.
 This Appendix B furnishes further details for the comprehension of Mérigot's manuscript \cite{Merigot}, whose
 contents are described in Section \ref{Sec::1}.

 First of all, let us recall the main definitions and notations we need for our purpose.

\medskip$\bullet$\,\,We denote by $\mathscr{T}_{\mathbb{R}}(x,y)$ the unital associative algebra of the polynomials (with coefficients in $\mathbb{R}$) in the two \emph{non-commuting} indeterminates $x$ and $y$. For every $(i,j) \in \mathbb{N}\times\mathbb{N}$ with $i+j \geq 1$, we define
$$Z_{i,j} := \sum_{n = 1}^{i+j}\frac{(-1)^{n+1}}{n}\cdot\sum_{
\begin{subarray}{c}
(i_1,j_1),\ldots,(i_n,j_n)\,\neq\,(0,0) \\
i_1+\cdots+i_n\,=\,i \\
j_1+\cdots+j_n\,=\,j
\end{subarray}}\frac{x^{i_1}y^{j_1}\cdots x^{i_n}y^{j_n}}{i_1!j_1!\ldots i_n!j_n!}.$$
 We also set $Z_{0,0} := 0$. The notation $Z_{i,j}(x,y)$ will also apply occasionally. Due to the classical algebraic version of
 the CBHD Theorem (see \cite[Chapter 3]{BonfiglioliFulci}), we know that
 the family of polynomials $\{Z_{i,j}\}_{i,j \in \mathbb{N}}$ is in fact a family of \emph{Lie-polynomials}, that is,
$$Z_{i,j} \in \Lie, \quad i,j \in \mathbb{N} .$$
 Here $\Lie$ denotes the smallest Lie subalgebra of $\mathscr{T}_{\mathbb{R}}(x,y)$ containing $x$ and $y$ ($\mathscr{T}_{\mathbb{R}}(x,y)$
 is equipped with the Lie-algebra structure naturally associated with its associative multiplication).

 \medskip$\bullet$\,\,Let $(X,[\cdot,\cdot],\|\cdot\|)$ be a real Banach-Lie algebra\footnote{We say that $X$ is a real Banach-Lie algebra if $X$
 is equipped with a (possibly infinite-dimensional) real Lie-algebra structure $(X,[\cdot,\cdot])$ and,
 at the same time, with a Banach-space structure $(X,\|\cdot\|)$ over $\mathbb{R}$,
 these structures being compatible, in that the map
 $X\times X\ni (g,g')\mapsto [g,g']\in X$ is required to be continuous.
 Since the bracket is a bilinear map, the above continuity assumption is equivalent to the existence
 of a positive constant $M$ such that $\|[g,g']\|\leq M\,\|g\|\,\|g'\|$ for
 every $g,g'\in X$. By replacing $\|\cdot\|$ with the equivalent
 norm $M\,\|\cdot\|$, we can suppose (and we shall do it henceforth)
 that the norm $\|\cdot\|$ is \emph{Lie-sub-multiplicative}, i.e.,
 \begin{equation}\label{Liesubmu}
    \|[g,g']\|\leq \|g\|\,\|g'\|,\quad\text{for every $g,g'\in X$.}
 \end{equation}} over the field $\mathbb{R}$.
For every $a,b \in X$, we denote by $u_{a,b}$ the \emph{unique} Lie-algebra morphism from $\Lie$ to $X$ such that
$$u_{a,b}(x) = a, \qquad u_{a,b}(y) = b.$$
For every $i,j \in \mathbb{N}$, the $(i,j)$-Dynkin polynomial (in $a$ and $b$) is the element of $X$ defined in the following way
$$Z_{i,j}(a,b) := u_{a,b}(Z_{i,j}(x,y)).$$
For every fixed $a,b \in X$, one has
$$\begin{cases}
Z_{1,0}(a,b) = a, \\
Z_{0,1}(a,b) = b, \\
Z_{i,0}(a,b) = Z_{0,i}(a,b) = 0, \quad \forall \ i \geq 2.
\end{cases}$$
Moreover, for every $i,j \in \mathbb{N}$ with $i+j \geq 1$, the following recursion formulas hold true:
\begin{gather}\label{ricorsive}
\begin{split}
Z_{i+1,j}(a,b) & = \frac{1}{i+1}\sum_{
\begin{subarray}{c}
1\,\leq\,h\,\leq\,i + j \\
(i_1,j_1),\ldots,(i_h,j_h)\,\neq\,(0,0) \\
i_1 + \cdots + i_h\,=\,i \\
j_1 + \cdots + j_h\,=\,j
\end{subarray}}K_h\cdot[Z_{i_1,j_1}(a,b), \ldots, [Z_{i_h,j_h}(a,b),a] \ldots ], \\
Z_{i,j+1}(a,b) & = \frac{1}{j+1}\sum_{
\begin{subarray}{c}
1\,\leq\,h\,\leq\,i + j \\
(i_1,j_1),\ldots,(i_h,j_h)\,\neq\,(0,0) \\
i_1 + \cdots + i_h\,=\,i \\
j_1 + \cdots + j_h\,=\,j
\end{subarray}}(-1)^hK_h\cdot[Z_{i_1,j_1}(a,b), \ldots, [Z_{i_h,j_h}(a,b),b] \ldots ],
\end{split}
\end{gather}
where $\left\{K_n\right\}_{n \in \mathbb{N}}$ is the sequence in $\mathbb{Q}$ defined as follows
$$
K_0 := 1 ,\qquad
\displaystyle K_n := -\sum_{i = 0}^{n-1}\frac{K_i}{(n + 1 - i)!},\quad  n \geq 1.
$$
 For an algebraic proof of these facts, see \cite{BiagiBonfiglioli}.
 By using the well-known Bernoulli numbers $\{B_n\}_n$, one has $K_n=B_n/n!$, so that
 $$\sum_{n=0}^\infty K_n\, z^n=\frac{z}{e^z-1},\quad \forall\,\,z\in \mathbb{C}:\,\,|z|<2\pi. $$
 A simple calculation (based on the sign of the Bernoulli numbers) shows that
 $$\sum_{n=0}^\infty |K_n|\, z^n=
 2 + \frac{z}{2}\left(1 - \cot\left(\frac{z}{2}\right)\right)=:G(z),\quad \forall\,\,z\in \mathbb{C}:\,\,|z|<2\pi. $$

\medskip$\bullet$\,\,Let $\Delta$ be the neighborhood of $(0,0)$ in $X\times X$ defined as follows
$$\Delta := \big\{(a,b) \in X\times X: \|a\| + \|b\| < \log2\big\}$$
 It is well-known that (see, e.g., the pioneering work by Dynkin \cite{Dynkin}), for every $a,b \in X$ such that $(a,b) \in \Delta$, the (homogeneous) CBHD series
$$Z(a,b) := \sum_{n = 0}^{\infty}\bigg(\sum_{i+j = n}Z_{i,j}(a,b)\bigg)$$
 is convergent.
 Moreover, since $(0,0) \in \Delta$ and $Z(0,0) = 0$, it is possible to find a neighborhood $W$ of $(0,0)$ contained in $\Delta$ such that
\begin{equation}\label{2pibenposta}
 \|Z(a,b)\| < 2\pi, \quad \forall \ a, b \in X : (a,b) \in W.
\end{equation}
 With the above choice of $W$, let $(a,b) \in W$ and let $\epsilon$ be a positive real number such that $(a,t\,b) \in W$ for every $t \in (-\epsilon,\epsilon)$. The function
$$\gamma: (-\epsilon,\epsilon) \longrightarrow X, \quad \gamma(t) := Z(a,tb),$$
 is a well-defined ($X$-valued) real analytic function on $(-\epsilon,\epsilon)$, and it is a solution of the following Cauchy problem
$$\begin{cases}
 y' = G(-\adj y)(b)=\sum_{n = 0}^{\infty}K_n(-\adj{y})^n(b) \\
y(0) = a.
\end{cases}$$
 We remark that, due to \eqref{2pibenposta}, $G(-\ad \gamma(t))$ is well-posed for every $t\in (-\epsilon,\epsilon)$.

 We are now ready to determine a subset of $X\times X$, that \emph{strictly} contains the set $\Delta$,
 on which the CBHD series converges. Using the notations introduced in the previous sections, let us consider the set
 $$\tild{\Omega} := \tild{\Omega}(\infty,-2\pi,2\pi) = \mathbb{R}\times(-2\pi,2\pi),$$
 and let $\Omega$ be the subset of $\mathbb{R}\times X$ associated with $\tild{\Omega}$, that is,
 $$\Omega := \Omega(\infty,2\pi) = \mathbb{R}\times \{y\in X\,:\,\|y\|<2\pi\}.$$
 For a fixed $b \in X$, we define the function $f_b$ in the following way
 $$f_b : \Omega \longrightarrow X, \quad f_b(t,y) :=G(-\adj y)(b)= \sum_{n = 0}^{\infty}K_n(-\adj{y})^n(b).$$
 [As a fact, note that $f_b$ does not depend on $t$.]
 We explicitly remark that this definition is well-posed, since the complex power series
 $\sum_{n = 0}^{\infty}K_n\,z^n$
 has radius of convergence $2\pi$. Since, for every $n \geq 1$, the map
 $$X^n \ni (y_1,\ldots,y_n) \mapsto [y_1, \ldots, [y_n,b] \ldots]$$
 in $n$-linear, the function $f_b$ is infinitely Fréchet-differentiable on $\Omega$, and in particular it is locally Lipschitz continuous on the same set. For every fixed $a\in X$ such that $\|a\|<2\pi$, we can then consider the following Cauchy problem
\begin{equation} \label{AppS2EqZCauchyPb}
\begin{cases}
 y' = f_b(t,y) \\
 y(0) = a,
\end{cases}
\end{equation}
 which has a maximal solution $\varphi_{a,b}$ defined on a open interval $\mathcal{D}_{a,b}$ containing $0$. We now want to apply Theorem \ref{AppS1MainTh} from Appendix A (or, more precisely, Corollary \ref{AppS1CorInCond}) to this Cauchy problem. In order to do this, we have to find a function $g_b$, defined on $\tild{\Omega}$, which satisfies the hypothesis (MJ) w.r.t.\,$f_b$ (see Definition \vref{def.iniziale}). Since $f_b$ is represented by a convergent series of functions and for every $n \geq 1$ one has (see \eqref{Liesubmu} in the footnote on page \pageref{Liesubmu})
 $$\|(-\adj{y})^n(b)\| \leq \|b\|\cdot \|y\|^n, \quad \forall \ y \in X,$$
 the natural candidate for $g_b$ is the function
\begin{equation} \label{AppS2EqDefg_b}
 g_b: \tild{\Omega} \longrightarrow \mathbb{R}, \quad g_b(t,z) := \|b\|\sum_{n = 0}^{\infty}|K_n|\,z^n=
 \|b\| \Big(2 + \frac{z}{2}\left(1 - \cot\left(\frac{z}{2}\right)\right)\Big).
\end{equation}
 Let us check that this function satisfies the hypothesis (MJ) w.r.t.\,$f_b$.
\begin{Lm}\label{lemma.biagi2}
 The function $g_b$ defined in \eqref{AppS2EqDefg_b} satisfies the hypothesis (MJ) w.r.t.\,$f_b$.
\end{Lm}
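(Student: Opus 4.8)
The plan is to verify, one by one, the three requirements (i), (ii), (iii) that constitute hypothesis (MJ) for the pair $(g_b,f_b)$ as listed in Definition \ref{def.iniziale}, bearing in mind that here $T=\infty$ and the relevant $z$-interval is $(-2\pi,2\pi)$. The key structural simplification I would exploit from the outset is that $g_b(t,z)$ does \emph{not} depend on $t$, so that every claim reduces to a statement about the single real-variable power series $z\mapsto\sum_{n=0}^{\infty}|K_n|\,z^{n}$. Its radius of convergence is again $2\pi$: by Cauchy--Hadamard, replacing the coefficients $K_n$ by $|K_n|$ leaves the radius unchanged, and $\sum_n K_n z^n=z/(e^z-1)$ has radius $2\pi$. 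Thus the whole verification is carried out on $z\in(-2\pi,2\pi)$, exactly the domain appearing in \eqref{AppS2EqDefg_b}.

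Requirements (i) and (ii) are then immediate. For (i), a power series with radius of convergence $2\pi$ defines a real-analytic function on $(-2\pi,2\pi)$, hence a $C^\infty$ and \emph{a fortiori} locally Lipschitz-continuous one; multiplying by the constant $\|b\|\geq 0$ preserves this, so $g_b$ is locally Lipschitz w.r.t.\ its second variable (equivalently, one may use the closed form $g_b(t,z)=\|b\|\,(2+\tfrac{z}{2}(1-\cot(z/2)))$, whose apparent singularity at $z=0$ is removable since $\tfrac{z}{2}\cot(z/2)\to 1$). For (ii), on $[0,2\pi)$ each summand $z\mapsto|K_n|\,z^n$ is non-decreasing because $|K_n|\geq 0$, hence so is their sum and so is $g_b(t,\cdot)=\|b\|\sum_n|K_n|(\cdot)^n$ as $\|b\|\geq 0$; the $t$-independence makes the quantifier over $t$ vacuous.

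The only step with genuine content is the majorization (iii). First I would establish, by induction on $n$ using the sub-multiplicativity \eqref{Liesubmu}, the iterated bound $\|(\adj{y})^{n}(b)\|\leq\|y\|^{n}\,\|b\|$ for all $y\in X$ and $n\geq0$: the base case is trivial and the inductive step reads $\|(\adj{y})^{n+1}(b)\|=\|[y,(\adj{y})^{n}(b)]\|\leq\|y\|\,\|(\adj{y})^{n}(b)\|$. Since $\|y\|<2\pi$ on $\Omega$, this bound shows that the series defining $f_b(t,y)$ converges normally in $X$, which licenses passing the norm through the infinite sum via the triangle inequality on partial sums. Consequently, for every $(t,y)\in\Omega$,
\[
\|f_b(t,y)\|
=\Big\|\sum_{n=0}^{\infty}K_n\,(-\adj{y})^{n}(b)\Big\|
\leq\sum_{n=0}^{\infty}|K_n|\,\|(\adj{y})^{n}(b)\|
\leq\|b\|\sum_{n=0}^{\infty}|K_n|\,\|y\|^{n}
=g_b(|t|,\|y\|),
\]
which is exactly \eqref{MJ}. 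I expect no real obstacle here: the difficulty, such as it is, is purely bookkeeping, namely confirming that normal convergence justifies the term-by-term triangle inequality. No delicate estimate is needed, because sub-multiplicativity makes the $n$-th term of $f_b$ at most $\|b\|\,\|y\|^{n}$, matching precisely the $n$-th coefficient of $g_b$; this coefficientwise matching is the whole reason the candidate in \eqref{AppS2EqDefg_b} is the natural one.
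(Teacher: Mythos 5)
Your proposal is correct and follows essentially the same route as the paper's proof: analyticity (hence local Lipschitz continuity) of $g_b$ for (i), non-negativity of the coefficients $|K_n|$ and of $\|b\|$ for (ii), and the term-by-term estimate $\|(\adj{y})^n(b)\|\leq\|b\|\,\|y\|^n$ from \eqref{Liesubmu} for (iii). The extra details you supply (the Cauchy--Hadamard remark and the normal-convergence justification of the triangle inequality) are left implicit in the paper but change nothing of substance.
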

\begin{proof}
 First of all, since $g_b \in \mathrm{C}^{\infty}(\tild{\Omega};\mathbb{R})$ ($g_b$ is in fact real analytic on $\tild{\Omega}$), it is locally Lipschitz continuous on $\tild{\Omega}$. Moreover, for every fixed $t \in [0,+\infty)$, the function
 $[0,2\pi) \ni z \mapsto g_b(t,z) = \|b\|\sum_{n = 0}^{\infty}|K_n|z^n$
 is increasing. Finally, for every $(t,y) \in \Omega$, one has
     $$\|f_b(t,y)\| \leq \sum_{n = 0}^{\infty}|K_n|\cdot \|(-\adj{y})^n(b)\| \leq \|b\| \sum_{n = 0}^{\infty}|K_n|\cdot \|y\|^n= g_b(|t|,\|y\|).$$
 This ends the proof.
\end{proof}
 We can therefore apply Corollary \ref{AppS1CorInCond}: It ensures that, if we denote by $\psi_{a,b}$ the maximal solution of the Cauchy problem
\begin{equation} \label{AppS2EqRealCauchyPb}
\begin{cases}
 z' = g_b(t,z) \\
 z(0) = \|a\|,
\end{cases}
\end{equation}
 defined on the open interval $(\tild{\alpha}_{a,b},\tild{\beta}_{a,b})$, one has
 \begin{itemize}
   \item $[0,\tild{\beta}_{a,b}) \subseteq \mathcal{D}_{a,b}$;
   \item $\|\varphi_{a,b}(t)\| \leq \psi(t)$, for every $t \in [0,\tild{\beta}_{a,b})$.
 \end{itemize}
 What is crucial is that, since $g$ in independent of $t$, the ODE in \eqref{AppS2EqRealCauchyPb} is a separable equation, and thus we can determine explicitly the values of $\tild{\alpha}_{a,b}$ and $\tild{\beta}_{a,b}$:
 $$\tild{\alpha}_{a,b} = \frac{1}{\|b\|}\int_{\|a\|}^{-2\pi}\frac{1}{G(u)}\,\d u,
  \qquad \tild{\beta}_{a,b} = \frac{1}{\|b\|}\int_{\|a\|}^{2\pi}\frac{1}{G(u)}\,\d u.$$
The integrals written above are both finite, since
\begin{equation}\label{Gammmma.G}
 G(u) = 2 + \frac{u}{2}\left(1 - \cot\left(\frac{u}{2}\right)\right), \quad \forall \ u \in (-2\pi,2\pi),
\end{equation}
 and thus the function $1/g_b$ is a bounded continuous function on the open interval $(-2\pi,2\pi)$.
 Finally, let $\Gamma$ be the subset of $X\times X$ defined as follows
\begin{equation}\label{Gammmma}
 \Gamma := \left\{(a,b) \in X\times X :\quad  \|a\|<2\pi,\,\,\,\|b\| < \int_{\|a\|}^{2\pi}\frac{1}{G(u)}\,\d u\right\},    
\end{equation}
 and let us prove that the CBHD series is convergent on $\Gamma$. A similar argument proves the convergence on the set analogous to $\Gamma$, with $a$ and $b$
 interchanged. This gives a convergence domain analogous to that in \cite{BlanesCasas}, where the convergence result is proved in the finite-dimensional case.
\begin{Lm} \label{AppS2LmTaylorphi}
 With the above notation, for every $a,b \in X$ with $\|a\|<2\pi$, the power series
 $$\sum_{n = 0}^{\infty}\frac{1}{n!}\,\varphi_{a,b}^{(n)}(0)\,t^n$$
 is absolutely convergent on $[0,\tild{\beta}_{a,b})$, and its sum is equal to $\varphi_{a,b}(t)$ (the maximal solution of
 \eqref{AppS2EqZCauchyPb}) on $[0,\tild{\beta}_{a,b})$. Furthermore, for every $n\geq 1$,
\begin{gather}\label{AppS2EqDerphi_1}
\begin{split}
 \varphi_{a,b}'(0) & = b + \sum_{h = 1}^{\infty}(-1)^h K_h\,[a,\ldots[a,b]\ldots],  \\
 \frac{\varphi_{a,b}^{(n+1)}(0)}{(n+1)!} & = \tfrac{1}{n+1}\sum_{h = 1}^{\infty}\sum_{n_1+\cdots+n_h = n}\frac{(-1)^h K_h}{n_1!\cdots n_h!}
 \,[\varphi_{a,b}^{(n_1)}(0),\ldots[\varphi_{a,b}^{(n_h)}(0),b]\ldots].
\end{split}
\end{gather}
\end{Lm}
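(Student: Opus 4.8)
The plan is to prove the two assertions in tandem: first derive the recursion \eqref{AppS2EqDerphi_1} by differentiating the ODE \eqref{AppS2EqZCauchyPb} term by term, and then control the $0$-centered Taylor coefficients of $\varphi_{a,b}$ by those of the scalar majorant $\psi_{a,b}$ of \eqref{AppS2EqRealCauchyPb}. Throughout I use that $f_b(t,y)=\sum_{h=0}^{\infty}K_h(-\adj y)^h(b)$ is independent of $t$ and real-analytic on $\{\|y\|<2\pi\}$, and that, by the standard theory of analytic ODEs in Banach spaces, both $\varphi_{a,b}$ and $\psi_{a,b}$ are real-analytic on their respective maximal domains.

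First I would establish \eqref{AppS2EqDerphi_1}. Writing $\varphi:=\varphi_{a,b}$ and abbreviating the $h$-fold bracket $[y_1,\ldots[y_h,b]\ldots]$, the equation \eqref{AppS2EqZCauchyPb} reads
\[
\varphi'(t)=\sum_{h=0}^{\infty}(-1)^hK_h\,[\varphi(t),\ldots[\varphi(t),b]\ldots].
\]
Evaluating at $t=0$ and using $(-\adj a)^h(b)=(-1)^h[a,\ldots[a,b]\ldots]$ together with $K_0=1$ gives the first line of \eqref{AppS2EqDerphi_1}. For $n\geq 1$ I differentiate $n$ times: the map $(y_1,\ldots,y_h)\mapsto[y_1,\ldots[y_h,b]\ldots]$ is $h$-linear, so the Leibniz rule for multilinear maps yields
\[
\frac{\d^n}{\d t^n}[\varphi(t),\ldots[\varphi(t),b]\ldots]=\sum_{n_1+\cdots+n_h=n}\frac{n!}{n_1!\cdots n_h!}\,[\varphi^{(n_1)}(t),\ldots[\varphi^{(n_h)}(t),b]\ldots].
\]
The $h=0$ summand is constant and drops out. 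Term-by-term differentiation of the series over $h$ is legitimate since $f_b$ is analytic (the differentiated series converge locally uniformly on $\{\|y\|<2\pi\}$), and dividing by $(n+1)!$ produces exactly the second line of \eqref{AppS2EqDerphi_1}.

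Next I would prove the majorization. Applying the \emph{same} computation to the scalar problem \eqref{AppS2EqRealCauchyPb}, whose right-hand side $g_b(z)=\|b\|\sum_h|K_h|z^h$ has non-negative coefficients, one obtains the analogous recursion with $(-1)^hK_h$ replaced by $\|b\|\,|K_h|$ and the brackets replaced by products $\psi^{(n_1)}(0)\cdots\psi^{(n_h)}(0)$; in particular $\psi^{(n)}(0)\geq 0$ for all $n$ by induction. Setting $c_n:=\|\varphi^{(n)}(0)\|/n!$ and $d_n:=\psi^{(n)}(0)/n!$, I claim $c_n\leq d_n$ for every $n$. This follows by induction: $c_0=d_0=\|a\|$, and the inductive step uses the Lie-submultiplicativity \eqref{Liesubmu}, namely $\|[\varphi^{(n_1)}(0),\ldots[\varphi^{(n_h)}(0),b]\ldots]\|\leq\|b\|\prod_i\|\varphi^{(n_i)}(0)\|$, together with $c_{n_i}\leq d_{n_i}$ (each $n_i\leq n$), to bound the recursion for $c_{n+1}$ by that for $d_{n+1}$. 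Hence $\sum_n c_n t^n\leq\sum_n d_n t^n$ for $t\geq 0$.

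The hard part will be showing that the scalar series $\sum_n d_n t^n$ converges on the \emph{whole} interval $[0,\tild\beta_{a,b})$, rather than merely near $0$; once this is done the majorant $c_n\leq d_n$ yields absolute convergence of $\sum_n\frac{\varphi^{(n)}(0)}{n!}t^n$ on $[0,\tild\beta_{a,b})$. Since all $d_n\geq 0$, Pringsheim's theorem asserts that the point $t=R$, where $R$ is the radius of convergence of $\sum_n d_n t^n$, is a singularity of its sum. But on $[0,R)$ this sum coincides with $\psi_{a,b}$, which is real-analytic on all of $(\tild\alpha_{a,b},\tild\beta_{a,b})$ — it is the inverse of the analytic, strictly increasing function $w\mapsto\frac{1}{\|b\|}\int_{\|a\|}^{w}\frac{\d u}{G(u)}$, whose derivative never vanishes since $1/G$ is bounded and continuous (hence $G$ is bounded away from $0$) on $(-2\pi,2\pi)$. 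Thus $R<\tild\beta_{a,b}$ would force $\psi_{a,b}$ to be singular at the interior point $R$ of its domain, a contradiction, so $R\geq\tild\beta_{a,b}$. Finally, $\sum_n\frac{\varphi^{(n)}(0)}{n!}t^n$ and $\varphi_{a,b}$ are both real-analytic on $[0,\tild\beta_{a,b})\subseteq\mathcal D_{a,b}$ (the inclusion coming from Corollary \ref{AppS1CorInCond}) and agree near $0$, so by the identity theorem they coincide throughout $[0,\tild\beta_{a,b})$, which completes the proof.
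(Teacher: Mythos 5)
Your proof is correct and follows the same overall strategy as the paper's: derive the recursion \eqref{AppS2EqDerphi_1} by differentiating the ODE, majorize the Taylor coefficients of $\varphi_{a,b}$ at $0$ by those of $\psi_{a,b}$, deduce absolute convergence on $[0,\tild{\beta}_{a,b})$ from the identity $\sum_{n}\frac{\psi_{a,b}^{(n)}(0)}{n!}\,t^n=\psi_{a,b}(t)$, and finally identify the sum of the series with $\varphi_{a,b}$. Two sub-steps are handled differently, and in both cases your treatment is the more explicit one. First, the paper writes the equality $\sum_{n}\frac{\psi_{a,b}^{(n)}(0)}{n!}\,t^n=\psi_{a,b}(t)$ on all of $[0,\tild{\beta}_{a,b})$ without comment; this is precisely the assertion that the MacLaurin series of $\psi_{a,b}$ converges up to $\tild{\beta}_{a,b}$, and your Pringsheim argument (non-negative coefficients force a genuine singularity at the radius of convergence, which cannot occur strictly inside the interval where $\psi_{a,b}$, being the inverse of an analytic function with non-vanishing derivative, is real-analytic) supplies a justification the paper leaves implicit. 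Second, for the identification of the sum with $\varphi_{a,b}$, the paper does not invoke analyticity of $\varphi_{a,b}$ on its maximal domain: it shows that the sum $u$ of the series satisfies $u'=f_b\circ u$ (both sides being analytic with equal derivatives of every order at $0$) and then concludes $u=\varphi_{a,b}$ by uniqueness for the locally Lipschitz Cauchy problem \eqref{AppS2EqZCauchyPb}; you instead take for granted that $\varphi_{a,b}$ is real-analytic on $\mathcal{D}_{a,b}$ and apply the identity theorem, which is valid but rests on slightly heavier machinery from analytic ODE theory in Banach spaces. Your inductive derivation of $\|\varphi_{a,b}^{(k)}(0)\|\leq\psi_{a,b}^{(k)}(0)$ from the recursion together with \eqref{Liesubmu} is exactly one of the two alternatives the paper relegates to a footnote.
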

\begin{proof}
 Let $a,b \in X$ be fixed, with $\|a\|<2\pi$. For every $t \in [0,\tild{\beta}_{a,b}) \subseteq \mathcal{D}_{a,b}$, one has
 $$\varphi_{a,b}^{(k)}(0) = \frac{\d^{k-1}}{\d t^{k-1}}\bigg|_{t = 0}(f_b \circ \varphi_{a,b})(t), \quad \forall \ k \geq 1,$$
 and it is not difficult to prove that\footnote{On could use, for example, the Faà di Bruno's Formula, together with the majorizing property of $g_b$ w.r.t.\,$f_b$.
  Alternatively, one can solve \eqref{AppS2EqZCauchyPb} and \eqref{AppS2EqRealCauchyPb} by series, thus obtaining explicit inductive formulas for the derivatives
  of $\varphi_{a,b}^{(k)}$ and $\psi_{a,b}^{(k)}$, and then majorize directly.}
 $$\|\varphi_{a,b}^{(k)}(0)\| \leq \psi_{a,b}^{(k)}(0), \quad \forall \ k \in \mathbb{N}.$$
 This proves that the MacLaurin series of $\varphi_{a,b}$ is absolutely convergent on $[0,\widetilde{\beta}_{a,b})$, since
$$\sum_{n = 0}^{\infty}\frac{1}{n!}\,\|\varphi_{a,b}^{(k)}(0)\|\,t^n \leq \sum_{n = 0}^{\infty}\frac{\psi_{a,b}^{(n)}(0)}{n!}\,t^n = \psi_{a,b}(t), \quad \forall \ t \in [0,\tild{\beta}_{a,b}).$$
 Let us consider the sum of the MacLaurin series of $\varphi_{a,b}$, that is the function $u$ defined by
$$u: [0,\tild{\beta}_{a,b}) \longrightarrow X, \quad u(t) := \sum_{n = 0}^{\infty}\frac{1}{n!}\,\varphi_{a,b}^{(n)}(0)\,t^n.$$
 It is obvious that $u$ is a real analytic $X$-valued function, and
 $\|u(t)\| \leq \psi_{a,b}(t) < 2\pi$, for every $t \in [0,\tild{\beta}_{a,b})$.
 In order to show that $u$ is equal to $\varphi_{a,b}$, we prove that $u$ solves the Cauchy problem \eqref{AppS2EqZCauchyPb} as well. First of all, one has
 $u(0) = \varphi_{a,b}(0)$.
 Moreover, since the norm of $u(t)$ is less than or equal to $2\pi$, the composition $f_b \circ u$ is a well-defined analytic function, and one has (remembering that $u$ and $\varphi_{a,b}$ have the same derivatives at $0$, for any order)
 $$\frac{\d^n}{\d t^n}\bigg|_{t = 0}(f_b \circ u)(t) = \varphi_{a,b}^{(n + 1)}(0) = u^{(n+1)}(0), \quad \forall \ n \geq 0.$$
 This shows that the MacLaurin series of $u'$ and of $f_b \circ u$ coincide, and thus (by unique continuation)
 $u'(t) = f_b(u(t))$, for any $t \in [0,\tild{\beta}_{a,b})$.
 Finally, if we write down explicitly the expression of $(f_b \circ u)^{(n)}(0)$, then we obtain 
 \eqref{AppS2EqDerphi_1}. This ends the proof.
\end{proof}
\noindent Due to Lemmas  \ref{lemma.biagi2} and \ref{AppS2LmTaylorphi}, 
 we are in a position to prove the main result of Appendix B:
\begin{Th} \label{AppS2MainTh}
 Let $X$ be a Banach-Lie algebra over $\mathbb{R}$ (equipped with a Lie-sub-multiplicative norm $\|\cdot\|$), and let $\Gamma$ be as in 
 \eqref{Gammmma}, with $G$ as in \eqref{Gammmma.G}.

 Then the homogeneous CBHD series $\sum_{n = 0}^{\infty}\left(\sum_{i+j = n}Z_{i,j}(a,b)\right)$
 is convergent for every $(a,b)\in \Gamma$. An analogous result holds by interchanging the roles of $a$ and $b$.
\end{Th}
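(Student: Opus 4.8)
The plan is to show that, for $(a,b)\in\Gamma$, the whole double family $\{Z_{i,j}(a,b)\}_{i,j\in\mathbb{N}}$ is \emph{absolutely summable}, i.e.\ $\sum_{i,j}\|Z_{i,j}(a,b)\|<\infty$; once this is established, every grouping of the family converges to one and the same sum, and in particular the triangular (homogeneous) grouping $\sum_{n}\big(\sum_{i+j=n}Z_{i,j}(a,b)\big)$ converges. First I would rewrite the defining condition of $\Gamma$: since $\tild{\beta}_{a,b}=\frac{1}{\|b\|}\int_{\|a\|}^{2\pi}\frac{1}{G(u)}\,\d u$, for $b\neq 0$ the membership $(a,b)\in\Gamma$ in \eqref{Gammmma} is \emph{equivalent} to $\|a\|<2\pi$ together with $\tild{\beta}_{a,b}>1$, so that $1\in[0,\tild{\beta}_{a,b})$ and, by Corollary \ref{AppS1CorInCond}, $\varphi_{a,b}(1)$ is well defined. (The degenerate case $b=0$ is trivial, since then $Z_{i,j}(a,0)=0$ for every $j\geq 1$ and the series collapses to $Z_{1,0}(a,0)=a$.)

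Next I would produce a \emph{term-by-term} majorization of the Dynkin polynomials. Taking norms in the recursion \eqref{ricorsive} and using the Lie-sub-multiplicativity \eqref{Liesubmu} (so that each iterated bracket is bounded by the product of the norms of its entries), one defines universal nonnegative constants $\bar{Z}_{i,j}$ by the very same recursion with $(-1)^hK_h$ replaced by $|K_h|$ and brackets replaced by ordinary products; an induction on $i+j$ then yields $\|Z_{i,j}(a,b)\|\leq \bar{Z}_{i,j}\,\|a\|^i\|b\|^j$ for all $i,j$. The point is to recognise the generating function of the $\bar{Z}_{i,j}$ as the majorant solution $\psi_{a,b}$. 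Indeed, rescaling time by $r=\|b\|t$ turns the scalar problem \eqref{AppS2EqRealCauchyPb} into $\tild{z}'=G(\tild{z})$, $\tild{z}(0)=\|a\|$, whose analytic solution expands as $\tild{z}(r)=\sum_{i,j}\bar{Z}_{i,j}\|a\|^i r^j$ (its coefficients obey exactly the $\bar{Z}$-recursion, because $\sum_{h}|K_h|\,\tild{z}^{\,h}=G(\tild{z})$); hence $\psi_{a,b}(t)=\tild{z}(\|b\|t)=\sum_{i,j}\bar{Z}_{i,j}\|a\|^i\|b\|^j\,t^j$. This is precisely the series-majorization alluded to in the footnote of Lemma \ref{AppS2LmTaylorphi}.

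Finally I would conclude. Since $1\in[0,\tild{\beta}_{a,b})$, the argument of Lemma \ref{AppS2LmTaylorphi} applied to the scalar problem (for which $\psi_{a,b}$ is its own majorant, with nonnegative Taylor coefficients $\psi_{a,b}^{(n)}(0)/n!=\sum_i\bar{Z}_{i,n}\|a\|^i\|b\|^n$) gives the convergence of $\sum_n \psi_{a,b}^{(n)}(0)/n!=\psi_{a,b}(1)$; by nonnegativity (Tonelli for series) this equals $\sum_{i,j}\bar{Z}_{i,j}\|a\|^i\|b\|^j=\psi_{a,b}(1)<\infty$. Combining with the bidegree bound, $\sum_{i,j}\|Z_{i,j}(a,b)\|\leq \psi_{a,b}(1)<\infty$, so the double family is absolutely summable and the triangular partial sums $\sum_{i+j\leq N}Z_{i,j}(a,b)$ converge; this is exactly the convergence of the homogeneous CBHD series, whose sum equals $\varphi_{a,b}(1)=Z(a,b)$ (the two agree near $t=0$ because $t\mapsto Z(a,tb)$ solves \eqref{AppS2EqZCauchyPb}, and absolute summability lets one rearrange the $j$-grouping of $\varphi_{a,b}(1)$ into the $(i+j)$-grouping). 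The statement with $a$ and $b$ interchanged follows from the symmetric construction (using $\tild{\alpha}_{a,b}$, or the field $f_a$ with initial datum $b$).

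I expect the genuine obstacle to be the middle step: upgrading the \emph{norm-of-the-sum} estimate $\|\varphi_{a,b}^{(n)}(0)\|\leq\psi_{a,b}^{(n)}(0)$ furnished directly by the majorant ODE into the finer \emph{sum-of-norms} estimate per bidegree, i.e.\ verifying that the generating function of the combinatorial majorants $\bar{Z}_{i,j}$ really is $\psi_{a,b}$. This is what allows the passage from the $j$-indexed Taylor grouping of $\varphi_{a,b}$ (all that Lemma \ref{AppS2LmTaylorphi} delivers on its own) to the $(i+j)$-indexed homogeneous grouping required by the theorem; everything else is bookkeeping with absolutely convergent series.
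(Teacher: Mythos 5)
Your proposal is correct and follows essentially the same route as the paper: the paper likewise proves absolute convergence of the double series $\sum_{i,j}\|Z_{i,j}(a,b)\|$ by using the recursion \eqref{ricorsive} together with Lie-sub-multiplicativity to bound the row sums $\sum_{i}\|Z_{i,j}(a,b)\|$ by $\psi_{a,b}^{(j)}(0)/j!$, and then observes that $\Gamma$ is exactly the condition $\widetilde{\beta}_{a,b}>1$, so the total is dominated by $\psi_{a,b}(1)<\infty$. Your explicit two-index majorants $\bar{Z}_{i,j}$ and their generating-function identification with $\psi_{a,b}$ are a more detailed packaging of the same induction, not a different argument.
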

\begin{proof}
 We prove the theorem by showing that, for every $(a,b) \in \Gamma$, the \emph{double series}
 $\sum_{i,j = 0}^{\infty}\|Z_{i,j}(a,b)\|$
 is convergent. To this aim, we fist remark that, arguing by induction (by also taking into account the recursive 
 relations \eqref{ricorsive}), one can easily obtain the following estimates (for any detail, see \cite{BiagiBonfiglioli})
\begin{align*}
 \sum_{i = 0}^{\infty}\|Z_{i,1}(a,b)\| & \leq \|b\|\left(1 + \sum_{h = 1}^{\infty}|K_h|\|a\|^h\right) = g_b(\|a\|) = \psi_{a,b}'(0), \\
 \sum_{i = 0}^{\infty}\|Z_{i,j}(a,b)\| & \leq \frac{\|b\|}{j+1}\left(\sum_{h = 0}^{\infty}\sum_{n_1+\cdots+n_h = j}\frac{|K_h|}{n_1!\cdots n_h!}
 \|\varphi_{a,b}^{(n_1)}(0)\|\cdots\|\varphi_{a,b}^{(n_h)}(0)\|\right) 
 \leq \frac{\psi_{a,b}^{(j)}(0)}{j!}.
\end{align*}
 Since the set $\Gamma$ is precisely the subset of $X\times X$ consisting of all the couples $(a,b)$ such that $\|a\|<2\pi$ and 
 $\tild{\beta}_{a,b} > 1$ (recall that $\tild{\beta}_{a,b}$ is the supremum of the maximal domain of the solution $\psi_{a,b}$ of 
 \eqref{AppS2EqRealCauchyPb}), we have
$$\sum_{i,j = 0}^{\infty}\|Z_{i,j}(a,b)\| = \sum_{j = 0}^{\infty}\left(\sum_{i = 0}^{\infty}\|Z_{i,j}(a,b)\|\right) \leq \sum_{j = 0}^{\infty}\frac{\psi_{a,b}^{(j)}(0)}{j!} = \psi_{a,b}(1).$$
This ends the proof.
\end{proof}


\begin{thebibliography}{SK}
 \bibitem{BiagiBonfiglioli}
 S. Biagi, A. Bonfiglioli:
 \emph{On the convergence of the Campbell-Baker-Hausdorff-Dynkin series
  in infinite-dimensional Banach-Lie algebras}, preprint (2013).

\bibitem{BlanesCasas}
 S. Blanes, F. Casas:
 \emph{On the convergence and optimization of the {B}aker-{C}ampbell-{H}ausdorff formula},
 Linear Algebra Appl., \textbf{378}  (2004), 135--158.
%
%



\bibitem{BonfiglioliFulci}
 A. Bonfiglioli, R. Fulci: \emph{Topics in Noncommutative Algebra.
 The Theorem of Campbell, Baker, Hausdorff and Dynkin},
 Lecture Notes in Mathematics, \textbf{2034}, Springer-Verlag:
 Heidelberg, 2012. ISBN: 978-3-642-22596-3.


%
%
%
%
%
%
%
%
%
\bibitem{Dynkin}
 E.\,B. Dynkin:
 \emph{Normed {L}ie algebras and analytic groups},
 Uspehi Matem. Nauk (N.S.), \textbf{5} (1950), n.1 (35), 135--186.
%
%
%

\bibitem{Merigot}
 M. M\'erigot:
 \emph{Domaine de convergence de la s\'erie de Campbell-Hausdorff},
 multigraphie interne, Nice University (1974).

%
%
%
%
%
%
%
%
%
%
%
%
%
%
%

\end{thebibliography}
\end{document}